\newtheorem{theorem}{Theorem}
\newtheorem{definition}{Definition}[section]
\newtheorem{lemma}{Lemma}
\newtheorem{pro}[definition]{Proposition}
\newtheorem{corollary}[definition]{Corollary}
\newtheorem{exam}[definition]{Example}
\numberwithin{equation}{section}
\begin{document}

\title{\textbf{Frobenius Splitting of Projetive Toric Bundles}}
\date{April 8, 2014}
\author{He Xin}
\email{hexin1733@gmail.com}

\maketitle
\begin{abstract} We give several mild conditions on a toric bundle on a nonsingular toric variety under which the projectivization of the toric bundle is Frobenius split.

\end{abstract}

\section{Introduction}

Let $X$ be a complete toric variety defined over an algebraically closed field of characteristic $p>0$. In \cite[Quesiton 7.6]{hmp2010}, the authors ask when the projectivization of a toric bundle on $X$ is Frobenius split.

\smallskip Recall that to any toric bundle $\mathcal{E}$ of rank $r$ on a toric variety $X(\Sigma)$ one can associate uniquely a compatible family of decreasing filtrations $\{E^{\alpha}(i)\}_{\alpha\in|\Sigma|}$ of a fixed vector space $E$ of dimension $r$ indexed by the finite set $|\Sigma|$ consisting of primitive vectors in $\Sigma$. For each toric bundle $\mathcal{E}$ and each $\alpha\in|\Sigma|$, we introduce a finite subset of integers $I^{\alpha}(\mathcal{E})\subset \mathds{Z}$, which is defined by $$i\in I^{\alpha}(\mathcal{E})\Leftrightarrow E^{\alpha}(i+1)\varsubsetneq E^{\alpha}(i),$$
\noindent and two numbers
 $$n_{\mathrm{max}}^{\alpha}(\mathcal{E})=\max\{i\in I^{\alpha}(\mathcal{E})\},\  \  n_{\mathrm{min}}^{\alpha}(\mathcal{E})=\min\{i\in I^{\alpha}(\mathcal{E})\}.$$

\noindent The \emph{Klyachko length} of a toric bundle $\mathcal{E}$ is defined to be
$$\mathrm{Kl}(\mathcal{E})=\max_{\alpha\in|\Sigma|}\{n_{max}^{\alpha}(\mathcal{E})-n_{min}^{\alpha}(\mathcal{E})\}.$$
  In this note we will show the following
\begin{theorem}\label{mt} Let $X$ be a smooth toric variety and $\mathcal{E}$ be a rank two toric bundle on $X$, then $\mathbb{P}(\mathcal{E})$ is Frobenius split provided $p>\mathrm{Kl}(\mathcal{E})$ and $n_{\mathrm{max}}^{\alpha}(\mathcal{E})\neq n_{\mathrm{min}}^{\alpha}(\mathcal{E})$ for all $\alpha\in|\Sigma|$.
\end{theorem}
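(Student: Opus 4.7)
The plan is to apply the Mehta--Ramanathan criterion for Frobenius splitting to $Y=\mathbb{P}(\mathcal{E})$. Since $\dim Y=n+1$ with $n=\dim X$, $Y$ is Frobenius split precisely when there exists $\tau\in H^0(Y,\omega_Y^{1-p})$ whose expansion in local parameters $u_1,\dots,u_{n+1}$ at some smooth closed point $y_0$ carries the monomial $u_1^{p-1}\cdots u_{n+1}^{p-1}$ with nonzero coefficient, and the task is to exhibit such a section.

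First I would translate this into a problem on $X$. The rank-two relative Euler sequence gives $\omega_{Y/X}\cong \mathcal{O}_Y(-2)\otimes\pi^*\det\mathcal{E}$, hence
\[
\omega_Y^{1-p}\;\cong\;\pi^*\bigl((\omega_X\otimes\det\mathcal{E})^{1-p}\bigr)\otimes\mathcal{O}_Y(2(p-1)),
\]
and the projection formula yields
\[
H^0(Y,\omega_Y^{1-p})\;\cong\;H^0\bigl(X,\;(\omega_X\otimes\det\mathcal{E})^{1-p}\otimes\mathrm{Sym}^{2(p-1)}\mathcal{E}\bigr).
\]
Call the bundle on the right $\mathcal{F}$. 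By Klyachko's formula $H^0(X,\mathcal{F})=\bigoplus_{m\in M}\bigcap_\alpha F^\alpha(\langle m,\alpha\rangle)$, the construction of $\tau$ reduces to finding a weight $m_0\in M$ whose isotypic component in $\mathcal{F}$ realizes the desired leading monomial at $y_0$.

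The heart of the argument is the computation of $F^\alpha$. The hypothesis $n_{\min}^\alpha<n_{\max}^\alpha$ supplies, at each ray, a distinguished line $L^\alpha\subset E$; choosing a basis $\{e_1^\alpha,e_2^\alpha\}$ with $e_2^\alpha$ spanning $L^\alpha$, the monomials $(e_1^\alpha)^{j}(e_2^\alpha)^{2(p-1)-j}$ form a basis of $\mathrm{Sym}^{2(p-1)}E$ adapted to $F^\alpha$, with filtration degrees forming, after the twists by $(\det\mathcal{E})^{1-p}$ and $\omega_X^{1-p}$, a symmetric arithmetic progression centered at $p-1$ with common difference $n_{\max}^\alpha-n_{\min}^\alpha$. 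The hypothesis $p>\mathrm{Kl}(\mathcal{E})$ precisely controls this spread, ensuring that the middle element always sits at filtration degree $p-1$ at every ray, which is the universal position through which a single weight $m_0$ can satisfy all the Klyachko constraints simultaneously.

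Concretely, I would fix a torus-fixed point $x_0\in X$ inside a maximal cone $\sigma_0$, use that $\mathcal{E}$ splits equivariantly over $U_{\sigma_0}$ as a sum of two characters so $Y|_{U_{\sigma_0}}$ is a toric variety with an explicit $T$-fixed point $y_0\in\pi^{-1}(x_0)$ and local parameters, and assemble $\tau$ as the product of the standard anti-canonical toric section on $X$ --- contributing $u_1^{p-1}\cdots u_n^{p-1}$ --- with a section of $(\det\mathcal{E})^{1-p}\otimes\mathrm{Sym}^{2(p-1)}\mathcal{E}$ whose middle-degree component at $\sigma_0$ supplies the fiber factor $u_{n+1}^{p-1}$. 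The main obstacle is the verification of the Klyachko constraints $m_0\in F^\alpha(\langle m_0,\alpha\rangle)$ at rays $\alpha\notin\sigma_0$, where the line $L^\alpha$ generically differs from the equivariant splitting used on $U_{\sigma_0}$. The Klyachko-length bound provides the numerical slack in the filtration indices to make the middle-degree subspace reachable simultaneously at every ray, while $n_{\max}^\alpha\neq n_{\min}^\alpha$ ensures that this subspace is genuinely one-dimensional rather than degenerating, so that $\bigcap_\alpha F^\alpha(\langle m_0,\alpha\rangle)$ contains the required vector.
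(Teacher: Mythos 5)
Your proposal follows a route essentially orthogonal to the paper's proof of Theorem~\ref{mt}.  The paper does not try to exhibit a section of $\omega_Y^{1-p}$ at all; instead it proves the stronger statement that $\mathbb{V}(\mathcal{E})$ is Frobenius split, via Proposition~\ref{pv} (Criterion A).  Concretely, it invokes Kaneyama's Proposition~\ref{ka} to write the transition matrices of $\mathcal{E}$ in the form $\mathrm{Diag}(\chi^1_{\sigma},\chi^2_{\sigma})\,P_{\sigma\tau}\,\mathrm{Diag}(\chi^1_{\tau},\chi^2_{\tau})^{-1}$, and applies Lemma~\ref{le}, which characterizes when the naive Frobenius splittings of $\mathbb{V}(\mathcal{E})$ on two trivializing charts agree: one needs $\varphi(a_{11}^{s}a_{21}^{p-s})=\varphi(a_{12}^{s}a_{22}^{p-s})=0$ for $1\le s\le p-1$.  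This reduces to showing that $(\chi^1_{\sigma})^{s}(\chi^2_{\sigma})^{p-s}$ is not a $p$-th power, which is exactly where the two hypotheses enter: $n_{\max}^{\alpha}\neq n_{\min}^{\alpha}$ forces the exponents of $\chi^1_{\sigma}/\chi^2_{\sigma}$ to be nonzero, and $p>\mathrm{Kl}(\mathcal{E})$ forces them to be nonzero modulo $p$.

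Your route --- pushing $\omega_Y^{1-p}$ forward, applying Klyachko's description of $H^0$, and producing a distinguished weight vector --- is the machinery the paper reserves for Theorem~\ref{mt2}, whose content is precisely the existence of an FS-vector in $H^0(X,S^{r(p-1)}\mathcal{E}\otimes\det\mathcal{E}^{1-p}\otimes\omega_X^{1-p})_{\mu}$.  So what you are really proposing is to deduce Theorem~\ref{mt} from Theorem~\ref{mt2}.  This is a legitimate plan in principle, but the decisive step is then exactly the one you do not carry out: showing that a single vector of the form $e_1^{p-1}e_2^{p-1}$ (for one global choice of basis of $E$) lies in $\bigcap_{\alpha}F^{\alpha}(0)$.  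Your computation that the monomial $(e_1^{\alpha})^{p-1}(e_2^{\alpha})^{p-1}$ sits at the ``middle'' filtration degree $p-1$ at each ray is correct, but the bases $\{e_1^{\alpha},e_2^{\alpha}\}$ change with $\alpha$, so this does not by itself produce a common element.  When you change from the $\alpha$-adapted basis to the $\beta$-adapted one, $e_1^{p-1}e_2^{p-1}$ acquires components in all degrees $0\le j\le 2(p-1)$, and membership in $F^{\beta}(0)$ amounts to the vanishing of every coefficient with $j>J_{\beta}$, where $J_{\beta}=\lfloor(p-1)(1+1/d_{\beta})\rfloor$ and $d_{\beta}=n_{\max}^{\beta}-n_{\min}^{\beta}$; for $d_{\beta}$ near $p-1$ this is a substantial constraint.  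Your closing sentence asserts that ``the Klyachko-length bound provides the numerical slack \dots\ so that $\bigcap_{\alpha}F^{\alpha}(\langle m_0,\alpha\rangle)$ contains the required vector,'' but this is precisely the point that needs an argument, and the argument is not at all automatic.  The paper's gluing method avoids this entirely: it never needs a compatible global choice of basis, because the condition in Lemma~\ref{le} is local on each overlap $U_{\sigma}\cap U_{\tau}$.

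In short: your approach is in the spirit of the paper's Criterion~B and Theorem~\ref{mt2}, not of the proof actually given for Theorem~\ref{mt}, and the step you flag as ``the main obstacle'' --- verifying all the Klyachko constraints simultaneously for rays outside $\sigma_0$ --- is left unproved and is where the real work lies.  To make your route precise you would either have to give an honest construction of the FS-vector or explain why the conditions $p>\mathrm{Kl}(\mathcal{E})$ and $n_{\max}^{\alpha}\neq n_{\min}^{\alpha}$ guarantee its existence; neither is done here.
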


For a toric bundle $\mathcal{E}$ on a complete toric variety $X$, there is an induced action of the torus $T$ on the vector space $H^0(X,\mathcal{E})$ and the $\chi$ isotopy component is denoted by $H^0(X,\mathcal{E})_{\chi}$. Let $\mu$ be the zero character, we will also prove the following

\begin{theorem}\label{mt2}  Let $X$ be a smooth complete toric variety and $\mathcal{E}$ be a toric bundle of rank $r$ on $X$. Then $\mathbb{P}(\mathcal{E})$ is Frobenius split if and only if there exists an FS-vector in $H^0(X,S^{r(p-1)}\mathcal{E}\otimes\det\mathcal{E}^{1-p}\otimes\omega_X^{1-p})_{\mu}$, which is regarded as a subspace of $S^{r(p-1)}E$.
\end{theorem}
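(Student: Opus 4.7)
The plan is to reduce the Frobenius splitting of $\mathbb{P}(\mathcal{E})$ to a condition on a $T$-invariant section of an explicit sheaf on $X$, using the Mehta--Ramanathan characterization of splitting in terms of an FS-vector in $H^{0}(\omega^{1-p})$, the projection formula along $\pi\colon\mathbb{P}(\mathcal{E})\to X$, and the torus-averaging trick for splittings.

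First, from the relative dualizing formula $\omega_{\mathbb{P}(\mathcal{E})/X}\cong\mathcal{O}_{\mathbb{P}(\mathcal{E})}(-r)\otimes\pi^{*}\det\mathcal{E}$ one obtains
$$\omega_{\mathbb{P}(\mathcal{E})}^{1-p}\;\cong\;\mathcal{O}_{\mathbb{P}(\mathcal{E})}(r(p-1))\otimes\pi^{*}\bigl(\det\mathcal{E}^{1-p}\otimes\omega_{X}^{1-p}\bigr).$$
Pushing forward with $\pi_{*}\mathcal{O}_{\mathbb{P}(\mathcal{E})}(r(p-1))=S^{r(p-1)}\mathcal{E}$ and the projection formula gives a canonical, $T$-equivariant isomorphism
$$H^{0}\bigl(\mathbb{P}(\mathcal{E}),\,\omega_{\mathbb{P}(\mathcal{E})}^{1-p}\bigr)\;\cong\;H^{0}\bigl(X,\,S^{r(p-1)}\mathcal{E}\otimes\det\mathcal{E}^{1-p}\otimes\omega_{X}^{1-p}\bigr).$$
Since $\mathbb{P}(\mathcal{E})$ carries a lifted $T$-action and Frobenius splittings admit $T$-averaging (any splitting $\phi\colon F_{*}\mathcal{O}\to\mathcal{O}$ can be projected onto its weight-$0$ component without losing the property $\phi(1)=1$, because $1$ is $T$-invariant), $\mathbb{P}(\mathcal{E})$ is Frobenius split iff there exists a $T$-invariant section of $\omega_{\mathbb{P}(\mathcal{E})}^{1-p}$ which is an FS-vector. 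Under the displayed isomorphism, $T$-invariant sections correspond precisely to the zero-character isotypic component $H^{0}(X,\,S^{r(p-1)}\mathcal{E}\otimes\det\mathcal{E}^{1-p}\otimes\omega_{X}^{1-p})_{\mu}$.

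Next, I need to match the FS-vector condition across this isomorphism and justify the embedding into $S^{r(p-1)}E$. Fix a $T$-fixed point $x_{0}\in X$ (corresponding to a maximal cone) with toric local coordinates $x_{1},\dots,x_{n}$ from the character lattice, and fiber coordinates $e_{1},\dots,e_{r}$ of $\mathcal{E}|_{x_{0}}$ adapted to the Klyachko filtrations along the rays of that cone. The one-dimensional toric sheaves $\det\mathcal{E}^{1-p}$ and $\omega_{X}^{1-p}$ possess canonical trivializations over the open torus orbit; composing with these identifies $H^{0}(X,\,S^{r(p-1)}\mathcal{E}\otimes\det\mathcal{E}^{1-p}\otimes\omega_{X}^{1-p})_{\mu}$ with a subspace of the generic fiber $S^{r(p-1)}E$. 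The Mehta--Ramanathan non-vanishing condition that the local expansion of the corresponding section on $\mathbb{P}(\mathcal{E})$ carry a nonzero coefficient of $(x_{1}\cdots x_{n}\,y_{1}\cdots y_{r-1})^{p-1}$ -- where $y_{1},\dots,y_{r-1}$ are projective fiber coordinates arising from $e_{1},\dots,e_{r}$ -- translates term by term into the defining condition of an FS-vector on the resulting element of $S^{r(p-1)}E$.

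The main obstacle will be the bookkeeping in this last step: verifying that the combined $T$-weights of $\det\mathcal{E}^{1-p}$, $\omega_{X}^{1-p}$, and $S^{r(p-1)}\mathcal{E}$ do cancel on the $\mu$-isotypic piece so that the evaluation map lands naturally in $S^{r(p-1)}E$, and then checking that the Mehta--Ramanathan condition at $x_{0}$ on $\mathbb{P}(\mathcal{E})$ matches exactly the FS-vector condition on the image. Once this dictionary is in place, both implications are immediate: a splitting of $\mathbb{P}(\mathcal{E})$ yields, after $T$-averaging, a $T$-invariant FS-vector, hence an FS-vector in $S^{r(p-1)}E$; conversely, any FS-vector in the $\mu$-isotypic component pulls back to a splitting section of $\omega_{\mathbb{P}(\mathcal{E})}^{1-p}$.
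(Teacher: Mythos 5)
Your outline follows essentially the same path as the paper: identify $H^0(\mathbb{P}(\mathcal{E}),\omega_{\mathbb{P}(\mathcal{E})}^{1-p})$ with $H^0(X,S^{r(p-1)}\mathcal{E}\otimes\det\mathcal{E}^{1-p}\otimes\omega_X^{1-p})$ via the relative dualizing sheaf and the projection formula, reduce by $T$-equivariance to the $\mu$-weight component, and finish with a local trace computation at a torus-fixed point. The paper packages the $T$-reduction through Criterion B (the pushforward $\pi: H^0(Y,\omega_Y^{1-p})\to H^0(X,\omega_X^{1-p})$) together with Lemma \ref{le4} identifying the unique $\mu$-weight splitting of $X$; you instead invoke $T$-averaging of splittings directly on $\mathbb{P}(\mathcal{E})$. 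These are equivalent reformulations of the same idea, not genuinely different routes.

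The gap is that you leave the step you call ``bookkeeping'' unverified, and that is precisely where the paper's proof does all of its work: proving the $T$-equivariance of $\pi$ and deriving the explicit formula (\ref{pis}), which shows that the trace kills every monomial $e_\sigma^I$ except $I=(p-1,\dots,p-1)$, and that the $\mu$-weight constraint forces the accompanying $\mathcal{O}_X$-coefficient to be exactly $c\,(u_1\cdots u_n)^{p-1}$. Your claim that the nonvanishing of the $(x_1\cdots x_n\,y_1\cdots y_{r-1})^{p-1}$-coefficient ``translates term by term'' into the FS-vector condition is not immediate: the FS-vector definition permits an arbitrary basis of $E$, whereas the local computation naturally singles out the eigen-basis attached to the chart $U_\sigma$, so one must argue that the $\mu$-weight restriction together with the Klyachko compatibility conditions makes these agree, and one must also check consistency across the different maximal cones. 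Without carrying this out, the proposal is a sound plan rather than a proof.
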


We will work throughout over an algebraically closed field $k$ of characteristic $p>0$.

\section{Preliminaries}

The objective of this section is twofold. The first is to provide two criteria for Frobenius splitting of a projective bundle over a smooth complete variety and the second is to review some facts on toric bundles.
\subsection{Criteria for Frobenius Splitting of Projective Bundles}\hfill

\smallskip Given an algebraic variety $X$ over $k$, it is said to be \emph{Frobenius split} or \emph{F-split} if the morphism \begin{equation}\mathcal{O}_X\rightarrow F_{X*}\mathcal{O}_X\label{fr}\end{equation}
\noindent defined by sending a section to its $p$-th power is split as a homomorphism of $\mathcal{O}_X$-modules.

\smallskip Suppose $X$ is \emph{F}-split, given a vector bundle $\mathcal{E}$ on $X$, next we will give two conditions on $\mathcal{E}$ under which the projective bundle $\mathbb{P}(\mathcal{E})$ is \emph{F}-split.

\subsubsection*{Criterion A}\hfill

\smallskip Let $\mathbb{V}(\mathcal{E})$ be the affine scheme $\mathrm{Spec}\bigoplus_{n\geq0}
S^n\mathcal{E}$ over $X$, then we have the following

\begin{pro}\label{pv}\cite[Example 1.1.10(3)]{kumarbrion} Let $\mathcal{E}$ be a vector bundle on $X$ such that there exists a section of the projection $\mathbb{V}(\mathcal{E})\rightarrow X$. Then if $\mathbb{V}(\mathcal{E})$ is Frobenius split, so is $\mathbb{P}(\mathcal{E})$.
\end{pro}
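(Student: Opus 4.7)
The plan is to transport the F-splitting of $\mathbb{V}(\mathcal{E})$ down to $\mathbb{P}(\mathcal{E})$ by going through the intermediate open subscheme $U := \mathbb{V}(\mathcal{E}) \setminus s(X)$, where $s$ denotes the section (in practice the zero section) furnished by the hypothesis. The key observation is that $U$ carries a natural morphism $\pi \colon U \to \mathbb{P}(\mathcal{E})$ realising it as a principal $\mathbb{G}_m$-bundle whose direct-image structure sheaf decomposes very cleanly over $\mathbb{P}(\mathcal{E})$; the two standard descent properties of F-splittings then finish the job.

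Concretely, I would proceed in three steps. First, restrict the given F-splitting $\varphi \colon F_\ast\mathcal{O}_{\mathbb{V}(\mathcal{E})} \to \mathcal{O}_{\mathbb{V}(\mathcal{E})}$ to the open subscheme $U$; since F-splittings always restrict along open immersions, this produces an F-splitting of $U$. Second, use the scalar $\mathbb{G}_m$-action on the fibres of $\mathbb{V}(\mathcal{E})$ to exhibit $\pi$ as a principal $\mathbb{G}_m$-bundle, and observe that its weight decomposition yields
\[
\pi_\ast\mathcal{O}_U \;\cong\; \bigoplus_{n\in\mathbb{Z}} \mathcal{O}_{\mathbb{P}(\mathcal{E})}(n).
\]
Under this identification, the canonical embedding $\mathcal{O}_{\mathbb{P}(\mathcal{E})} \hookrightarrow \pi_\ast\mathcal{O}_U$ is the inclusion of the $n=0$ summand, and is visibly $\mathcal{O}_{\mathbb{P}(\mathcal{E})}$-linearly split by projection onto that summand. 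Third, invoke the standard descent principle for F-splittings: if $f \colon Y \to Z$ is a morphism of noetherian schemes such that $\mathcal{O}_Z \to f_\ast\mathcal{O}_Y$ admits an $\mathcal{O}_Z$-linear splitting, then any F-splitting of $Y$ induces one of $Z$. Applied to $\pi$, this yields the required F-splitting of $\mathbb{P}(\mathcal{E})$.

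The step requiring the most content is the second one, namely the identification of $\pi_\ast\mathcal{O}_U$ with $\bigoplus_{n\in\mathbb{Z}}\mathcal{O}_{\mathbb{P}(\mathcal{E})}(n)$; this is where the explicit $\mathrm{Spec}/\mathrm{Proj}$ descriptions of $\mathbb{V}(\mathcal{E})$ and $\mathbb{P}(\mathcal{E})$ really enter the argument. The role of the section hypothesis is mild: it specifies the closed subscheme $s(X)$ whose removal yields a locus on which $\mathbb{G}_m$ acts freely, and in practice one takes $s$ to be the zero section. Other than this classical computation, the proof is a formal assembly of the two standard descent properties of F-splittings (restriction to opens, and descent through morphisms with split structure map), and no serious obstacle is anticipated.
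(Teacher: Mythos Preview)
The paper does not supply its own proof of this proposition; it simply cites \cite[Example 1.1.10(3)]{kumarbrion} and moves on, so there is nothing in the paper to compare your argument against. Your outline is correct and is in fact the standard argument found in Brion--Kumar: restrict the given splitting of $\mathbb{V}(\mathcal{E})$ to the open complement $U$ of the zero section, identify the map $U\to\mathbb{P}(\mathcal{E})$ as a $\mathbb{G}_m$-torsor with $\pi_*\mathcal{O}_U\cong\bigoplus_{n\in\mathbb{Z}}\mathcal{O}_{\mathbb{P}(\mathcal{E})}(n)$, and descend via the $\mathcal{O}_{\mathbb{P}(\mathcal{E})}$-linear projection onto the degree-zero summand.

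One minor remark: the zero section of $\mathbb{V}(\mathcal{E})=\mathrm{Spec}\bigoplus_{n\ge 0}S^n\mathcal{E}\to X$ always exists, coming from the augmentation $\bigoplus_{n\ge 0}S^n\mathcal{E}\to S^0\mathcal{E}=\mathcal{O}_X$. So the hypothesis ``there exists a section'' is automatically satisfied, and your parenthetical ``in practice one takes $s$ to be the zero section'' can be made unconditional; you need not frame the section as an extra datum. The paper's subsequent sentence about twisting by a line bundle to arrange a section is therefore unnecessary for this particular argument, though harmless.
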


Since $\mathbb{P}(\mathcal{E})$ is isomorphic to $\mathbb{P}(\mathcal{E}\otimes \mathcal{L})$ for any line bundle $\mathcal{L}$ as a $k$-variety, we can always assume there exists a section of the projection $\mathbb{V}(\mathcal{E})\rightarrow X$.

\smallskip Next we assume $X$ is smooth, then the affine bundle $\mathbb{V}(\mathcal{E})$ over $X$ is \emph{F}-split locally since any smooth affine variety is \emph{F}-split \cite[Prop. 1.16]{kumarbrion}. To be more explicit, let $X$ be a smooth affine variety,
$\varphi:F_{X*}\mathcal{O}_X\rightarrow\mathcal{O}_X$ be a Frobenius splitting, and $\mathcal{E}$ be a trivializable vector bundle on $X$
with basis $e_i, 1\leq i\leq n$. Then one sees easily the following morphism defines a Frobenius splitting of $\mathbb{V}(\mathcal{E})$.
\begin{equation}\label{fs}
\phi(ae_1^{\otimes s_1}\otimes\cdots \otimes e_r^{\otimes s_r})=
\begin{cases}\varphi(a)e_1^{\otimes \frac{s_1}{p}}\otimes\cdots\otimes e_r^{\otimes \frac{s_r}{p}}, & \text{if $p\,|\,s_i$ for all $1\leq i\leq r$};\\
             0, & \text{otherwise}.
\end{cases}
\end{equation}

For $r\geq2$ one can check easily the Frobenius splitting of $\mathbb{V}(\mathcal{E})$ defined above depends on the choice of the basis of $\mathcal{E}$. In general, the constraints on the transition matrix between two different basis to ensure the coincidence of the two associated Frobenius splittings are very complicated to write down. However, when the rank of $\mathcal{E}$ is two we have the following

\begin{lemma}\label{le} Let
$\{f_1,f_2\}$ and $\{e_1,e_2\}$ be two basis of $\mathcal{E}$ and $(a_{ij})_{2\times2}$ be the transition matrix from $\{e_1,e_2\}$ to $\{f_1,f_2\}$.
Then the Frobenius splitting given by (\ref{fs}) for $\mathbb{V}(E)$ associated to the two bases $\{f_1,f_2\}$ and $\{e_1,e_2\}$ coincide iff the invertible matrix $(a_{ij})$ satisfies $\varphi(a^s_{11}a^{p-s}_{21})=\varphi(a^s_{12}a^{p-s}_{22})=0$ for all $1\leq s\leq p-1$.

\end{lemma}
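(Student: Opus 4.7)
The plan is to check both splittings on the finite collection of monomials $f_1^{r_1} f_2^{r_2}$ with $0 \le r_1, r_2 \le p-1$ and compare. Because $\phi_e$ and $\phi_f$ are both Frobenius splittings of $\mathbb{V}(\mathcal{E})$, they satisfy the twisted linearity $\phi(g^p x) = g \phi(x)$, so agreement on this generating set forces $\phi_e = \phi_f$ on all of $F_{*}\mathcal{O}_{\mathbb{V}(\mathcal{E})}$. On $(r_1, r_2) = (0, 0)$ both maps return $\varphi(1)$, and $\phi_f(f_1^{r_1}f_2^{r_2}) = 0$ for every other pair in this range, so the lemma reduces to characterizing when $\phi_e(f_1^{r_1}f_2^{r_2}) = 0$ for all nonzero pairs with $0 \le r_1, r_2 \le p-1$.

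Writing $f_i = \sum_j a_{ij} e_j$ and expanding via the binomial theorem in the symmetric algebra,
\[
f_1^{r_1} f_2^{r_2} = \sum_{i, j} \binom{r_1}{i}\binom{r_2}{j}\, a_{11}^{i} a_{12}^{r_1 - i} a_{21}^{j} a_{22}^{r_2 - j}\, e_1^{i+j}\, e_2^{r_1 + r_2 - i - j}.
\]
Applying $\phi_e$ by the rule (\ref{fs}), only summands with $p \mid i+j$ and $p \mid r_1 + r_2 - i - j$ survive, and together these force $p \mid r_1 + r_2$. Since $0 \le r_1 + r_2 \le 2p - 2$, the only nontrivial case is $r_1 + r_2 = p$; for every other nonzero pair the entire sum is already zero and there is nothing more to verify.

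In the remaining case $(r_1, r_2) = (s, p-s)$ with $1 \le s \le p-1$, the constraints $0 \le i \le s$, $0 \le j \le p-s$, and $p \mid i+j$ pin down exactly the two indices $(i, j) = (0, 0)$ and $(i, j) = (s, p-s)$, because $i+j$ lies in $[0, p]$ and must equal $0$ or $p$. The two surviving contributions give
\[
\phi_e(f_1^s f_2^{p-s}) = \varphi(a_{11}^s a_{21}^{p-s})\, e_1 + \varphi(a_{12}^s a_{22}^{p-s})\, e_2,
\]
which vanishes for every $1 \le s \le p-1$ iff both listed $\varphi$-values vanish, yielding both directions of the equivalence. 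The main obstacle is purely bookkeeping; the decisive observation is the sharp bound $0 \le r_1 + r_2 \le 2p - 2$, which collapses the divisibility conditions to the single critical total degree $p$ and leaves only two indices contributing to the resulting sum.
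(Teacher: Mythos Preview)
Your proof is correct and follows essentially the same approach as the paper's: both reduce to the explicit computation of $\phi_e(f_1^{s}f_2^{p-s})$ via binomial expansion and identify the two surviving terms giving $\varphi(a_{11}^s a_{21}^{p-s})\,e_1+\varphi(a_{12}^s a_{22}^{p-s})\,e_2$. Your reduction is slightly cleaner---restricting at the outset to exponents $0\le r_1,r_2\le p-1$ via $p$-th-power linearity and using the bound $r_1+r_2\le 2p-2$---whereas the paper treats all $(r,s)$ and factors out $p$-th powers at the end, but the substance is the same.
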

\begin{proof} Let $\phi_f$ and $\phi_e$ be the Frobeniuis splitting associated to the basis $\{e_1,e_2\}$ and $\{f_1,f_2\}$ as given in (\ref{fs}) respectively. Then the coincidence of $\phi_f$ and $\phi_e$ is equivalent to
\begin{equation}\label{ef}\phi_f(f_1^{\otimes r}\otimes f_2^{\otimes s})=\phi_e((a_{11}e_1+a_{12}e_2)^{\otimes r}\otimes(a_{21}e_1+a_{22}e_2)^{\otimes s})
\end{equation}

\noindent for all pairs of nonnegative integers $(r,s)$. Let $s=p-r$, by comparing the coefficients of $e_1$ and $e_2$ on the two sides one obtains easily the necessity of the condition in the lemma.

\smallskip Assuming the condition in the lemma, now we prove the equality (\ref{ef}). If $p\nmid r+s$ one can see easily both sides the above equality are 0. Now we assume $r+s=pm$, if $r\,|\,p$, then the equality (\ref{ef}) follows easily from the definition (\ref{fs}). If $r\nmid p$, let $r=pm_1+q_1$, $s=pm_2+q_2$ such that $1\leq q_1,q_2\leq p-1$, then $q_1+q_2=p$. The right side of (\ref{ef}) can be written as
$$(a_{11}e_1+a_{12}e_2)^{\otimes m_1}\otimes(a_{21}e_1+a_{22}e_2)^{\otimes m_2}\otimes\phi_e((a_{11}e_1+a_{12}e_2)^{\otimes q_1}\otimes(a_{21}e_1+a_{22}e_2)^{\otimes q_2}).$$
\noindent Then by the condition in the lemma one can see easily $\phi_e((a_{11}e_1+a_{12}e_2)^{\otimes q_1}\otimes(a_{21}e_1+a_{22}e_2)^{\otimes q_2})$ is 0 hence the lemma is proved.

\end{proof}

Based on the lemma above, now we can give our first criterion.

\begin{pro}\label{ca} Let $\mathcal{E}$ be vector bundle of rank two over a smooth algebraic variety $X$ which is F-split, then the $\mathbb{P}^1$-bundle $\mathbb{P}(\mathcal{E})$ is also F-split, if there exists a representation of $\mathcal{E}$ as an element of $H^1(X,\mathcal{GL}_2)$ such that all the transition matrices given by this representation satisfy the conditions of lemma \ref{le}.
\end{pro}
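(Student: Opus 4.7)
The plan is to reduce to Proposition \ref{pv} by constructing a Frobenius splitting of the total space $\mathbb{V}(\mathcal{E})$ directly from local data, using Lemma \ref{le} as the compatibility tool that lets local splittings glue.

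First, after possibly replacing $\mathcal{E}$ by $\mathcal{E}\otimes\mathcal{L}$ for a suitable line bundle $\mathcal{L}$ (which changes neither $\mathbb{P}(\mathcal{E})$ nor the shape of the hypothesis on transition matrices once $\mathcal{L}$ is chosen so that a global section of $\mathbb{V}(\mathcal{E}\otimes\mathcal{L})\to X$ exists), we are in the setting of Proposition \ref{pv}. Thus it is enough to prove that $\mathbb{V}(\mathcal{E})$ itself is Frobenius split.

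Next, fix a global Frobenius splitting $\varphi:F_{X*}\mathcal{O}_X\to\mathcal{O}_X$, which exists because $X$ is assumed to be $F$-split. By hypothesis, $\mathcal{E}$ admits a representation as a cocycle in $H^1(X,\mathcal{GL}_2)$ relative to an affine open cover $\{U_\alpha\}$, with transition matrices $(a_{ij}^{\alpha\beta})$ satisfying the conditions of Lemma \ref{le} with respect to $\varphi|_{U_\alpha\cap U_\beta}$. On each $U_\alpha$ the bundle $\mathcal{E}|_{U_\alpha}$ is trivialized by a basis $\{e_1^\alpha,e_2^\alpha\}$, and so formula (\ref{fs}) yields a Frobenius splitting $\phi_\alpha$ of $\mathbb{V}(\mathcal{E})|_{U_\alpha}=\mathbb{V}(\mathcal{E}|_{U_\alpha})$.

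The core step is to verify that $\phi_\alpha$ and $\phi_\beta$ agree on the overlap $U_\alpha\cap U_\beta$. On this overlap we have two trivializations of $\mathcal{E}$ related by the transition matrix $(a_{ij}^{\alpha\beta})$, and both local splittings are constructed from the same restricted splitting $\varphi|_{U_\alpha\cap U_\beta}$ via (\ref{fs}). Lemma \ref{le}, applied exactly to this situation, is the statement that the hypothesis $\varphi((a_{11}^{\alpha\beta})^s(a_{21}^{\alpha\beta})^{p-s})=\varphi((a_{12}^{\alpha\beta})^s(a_{22}^{\alpha\beta})^{p-s})=0$ for $1\le s\le p-1$ is equivalent to $\phi_\alpha=\phi_\beta$ on overlaps. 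Since this holds by assumption for every pair, the $\phi_\alpha$ glue to a global $\mathcal{O}_X$-linear map $F_{*}\mathcal{O}_{\mathbb{V}(\mathcal{E})}\to\mathcal{O}_{\mathbb{V}(\mathcal{E})}$, and the splitting property can be checked locally, so the global map is a Frobenius splitting of $\mathbb{V}(\mathcal{E})$. Proposition \ref{pv} then gives the $F$-splitting of $\mathbb{P}(\mathcal{E})$.

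The main obstacle is essentially bookkeeping: one must observe that Lemma \ref{le} really does apply verbatim to the restriction $\varphi|_{U_\alpha\cap U_\beta}$, and that the same $\varphi$ is being used at every vertex of the cover so that the pairwise agreements are mutually consistent (no cocycle obstruction arises, only pairwise equalities on intersections). Aside from this, the argument is a direct gluing of the local constructions of Section 2.1.
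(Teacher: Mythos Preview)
Your argument is correct and is exactly the one the paper has in mind: the proposition is stated immediately after Lemma \ref{le} without a written proof precisely because the intended argument is the gluing you describe, namely define the local splittings of $\mathbb{V}(\mathcal{E})$ by formula (\ref{fs}), use Lemma \ref{le} to match them on overlaps, and then invoke Proposition \ref{pv}. Your remark that twisting by a line bundle preserves the hypothesis (since $\varphi(\ell^{p}a_{11}^{s}a_{21}^{p-s})=\ell\,\varphi(a_{11}^{s}a_{21}^{p-s})$) is a nice explicit check of the reduction the paper only asserts.
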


\subsubsection*{Criterion B}\hfill

\smallskip Firstly note that a splitting of (\ref{fr}) yields
a nonzero global section of the sheaf $\mathscr{H}om(F_{X*}\mathcal{O}_X,\mathcal{O}_X)$. Now we introduce the evaluation map
\begin{equation}\label{evl}  \epsilon:\mathscr{H}om(F_{X*}\mathcal{O}_X,\mathcal{O}_X)\rightarrow\mathcal{O}_X,\,\,\,\varphi\mapsto\varphi(1),
\end{equation}
\noindent then a splitting of (\ref{fr}) is equivalent to the existence of a global section $\varphi$ of $\mathscr{H}om(F_{X*}\mathcal{O}_X,\mathcal{O}_X)$ such that $\epsilon(\varphi)=1$.

\smallskip The reformulation of Frobenius splitting given above is the starting point of our second criterion. Suppose we are given a proper morphism $f:Y\rightarrow X$ of algebraic varieties such that $f^{\#}:\mathcal{O}_X\rightarrow f_*\mathcal{O}_Y$ is an isomorphism, then we have an induced morphism \begin{equation}\label{iso2}f_*\mathscr{H}om(F_{Y*}\mathcal{O}_Y,\mathcal{O}_Y)\rightarrow\mathscr{H}om(F_{X*}\mathcal{O}_X,\mathcal{O}_X).\end{equation}
\noindent The morphism above obviously commutes with the evaluation map (\ref{evl}). If the this morphism is surjective, then it follows that the \emph{F}-splitness of $X$ implies that of $Y$, c.f. \cite[1.3.E(7)]{kumarbrion}.

\medskip Next based on a result of duality theory, we will simplify the checking of the surjectivity of (\ref{iso2}). Firstly, by definition of the upper shriek functor \cite[Chapter 2, exe. 6.10]{hartshorne1977algebraic}, we have the following isomorphism \begin{equation}\label{shriek}\mathscr{H}om(F_{X*}\mathcal{O}_X,\mathcal{O}_X)\cong F_{X*}\mathscr{H}om(\mathcal{O}_X,F^!\mathcal{O}_X).\end{equation}

From now on we assume $X$ is a smooth complete variety and $f$ is also a smooth morphism, which includes in particular the case when $Y$ is a projective bundle over $X$. Then by \cite{hartshorne1966} we have an isomorphism
\begin{equation}\label{gd}F^!\mathcal{O}_X\cong\mathscr{H}om(F^*\omega_X,\omega_X)\cong \omega^{1-p}_{X}.\end{equation}
\noindent Therefore, by combining (\ref{shriek}) and (\ref{gd}) we have the following isomorphism of sheaves of $\mathcal{O}_X$-modules
\begin{equation}\mathscr{H}om(F_{X*}\mathcal{O}_X,\mathcal{O}_X)\cong F_{X*}\omega^{1-p}_{X},\label{iso1}\end{equation}

\noindent whence an induced isomorphism from the space of global sections of $\mathscr{H}om(F_{X*}\mathcal{O}_X,\mathcal{O}_X)$ to $H^0(X,\omega^{1-p}_X)$.

The above argument also applies to $Y$. Furthermore, the isomorphism (\ref{iso1}) is canonical thus from (\ref{iso2}) we have an induced morphism
\begin{equation}\label{iso3}f_*F_{Y*}\omega^{1-p}_Y\rightarrow F_{X*}\omega^{1-p}_X.
\end{equation}
\noindent In particular, we have an induced morphism on global sections
\begin{equation}\label{gs}\pi: H^0(Y,\omega_Y^{1-p})\rightarrow H^0(X,\omega_X^{1-p})
\end{equation}
By our earlier argument, if $\pi$ is surjective, the \emph{F}-splitness of $X$ implies that of $Y$ \cite[1.3E(7)]{kumarbrion}.

\medskip The morphism (\ref{iso1}) can be derived in a more direct way, based on which (\ref{iso3}) and (\ref{gs}) will become more explicit. First note that we have obviously
\begin{equation}F_{X*}\omega^{1-p}_X\cong F_{X*}\mathscr{H}om(F_X^*\omega_X,\omega_X)\cong\mathscr{H}om(\omega_X,F_{X*}\omega_X).\label{iso1'}\end{equation}
\noindent We will define an isomorphism from $\mathscr{H}om(\omega_X,F_{X*}\omega_X)$ to $\mathscr{H}om(F_{X*}\mathcal{O}_X,\mathcal{O}_X)$ and our new description of (\ref{iso1}) will be defined to be the composite of (\ref{iso1'}) with this isomorphism. For that purpose, we need to use the trace map $$\tau:F_{X*}\omega_X\rightarrow\omega_X.$$

\noindent Recall that $\tau$ is the composite of the projection $F_{X*}\omega_X\rightarrow\mathcal{H}^n(F_{X*}\Omega^{\bullet}_{X/k})$ with the Cartier isomorphism
$$\mathcal{H}^n(F_{X*}\Omega^{\bullet}_{X/k})\xrightarrow{\approx}\Omega^n_{X/k},$$
\noindent where $n=\dim X$.  The trace map can be written explicitly in terms of local coordinates as follows.

\begin{lemma}\cite[Proposition 1.3.6]{kumarbrion}\label{tr} Let $X$ be a nonsingular variety of dimension $n$ and $x_1,\cdots,x_n$ be a set of coordinates at a point $x$ of $X$, then the trace map $\tau$ is given by
$$\tau(fdx_1\wedge\cdots\wedge dx_n)=Tr(f)dx_1\wedge\cdots\wedge dx_n,$$
\noindent where $f=\sum_{\mathbf{i}}f_{\mathbf{i}}x^{\mathbf{i}}\in\mathcal{O}_{X,x}\subset k[[x_1,\cdots,x_n]]$ and $Tr(f):=\sum_{\mathbf{i}}f^{\frac{1}{p}}_{\mathbf{i}}x^{\mathbf{j}}$ with the summation taken over all multi-index $\mathbf{i}$ such that $\mathbf{i}=\mathbf{p-1}+p\mathbf{j}$.

\end{lemma}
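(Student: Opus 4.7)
The plan is to verify the formula by unwinding the definition of $\tau$ locally at the point $x$ and tracking how the two constituent maps act on the monomial basis of $F_{X*}\omega_X$.

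First I would reduce to a formal/local computation. Since the statement concerns the stalk at $x$ and both the quotient map $F_{X*}\omega_X\twoheadrightarrow\mathcal{H}^n(F_{X*}\Omega^{\bullet}_{X/k})$ and the Cartier isomorphism are local and compatible with $\mathfrak{m}_x$-adic completion, I may assume $f\in k[[x_1,\dots,x_n]]$ and work monomial by monomial, taking formal sums at the end. So it suffices to compute $\tau$ on a single form $f_{\mathbf{i}}x^{\mathbf{i}}dx_1\wedge\cdots\wedge dx_n$ and split into two cases according to whether every component of $\mathbf{i}$ is $\equiv p-1\pmod p$ or not.

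Next, for the ``off-diagonal'' case I would show such a monomial already dies under the quotient onto $\mathcal{H}^n$. Suppose some $i_k\not\equiv p-1\pmod p$, say $i_1$ for definiteness. Then $i_1+1$ is invertible in $k$, and one checks directly that
\[
d\!\left(\tfrac{1}{i_1+1}x_1^{i_1+1}x_2^{i_2}\cdots x_n^{i_n}\,dx_2\wedge\cdots\wedge dx_n\right)=x^{\mathbf{i}}\,dx_1\wedge\cdots\wedge dx_n,
\]
using that the only surviving term of the exterior derivative is $dx_1$ (the variables $x_j$ for $j\geq2$ already appear in the form). Hence this monomial is exact in $F_{X*}\Omega^{\bullet}_{X/k}$ and maps to zero in $\mathcal{H}^n$, contributing nothing to $\tau$.

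For the ``diagonal'' case $\mathbf{i}=\mathbf{p-1}+p\mathbf{j}$, I would use perfectness of $k$ to write $f_{\mathbf{i}}=(f_{\mathbf{i}}^{1/p})^p$ and rearrange
\[
f_{\mathbf{i}}x^{\mathbf{i}}\,dx_1\wedge\cdots\wedge dx_n=\bigl(f_{\mathbf{i}}^{1/p}x^{\mathbf{j}}\bigr)^{p}\cdot x_1^{p-1}\cdots x_n^{p-1}\,dx_1\wedge\cdots\wedge dx_n.
\]
The inverse Cartier map $C^{-1}\colon\Omega^n_{X/k}\xrightarrow{\sim}\mathcal{H}^n(F_{X*}\Omega^{\bullet}_{X/k})$ is characterized in local coordinates by $g\,dx_1\wedge\cdots\wedge dx_n\mapsto[g^{p}x_1^{p-1}\cdots x_n^{p-1}\,dx_1\wedge\cdots\wedge dx_n]$ (the multiplicative extension of the degree-one formula $C^{-1}(dx_i)=[x_i^{p-1}dx_i]$). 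Reading this backwards, the Cartier isomorphism $C$ sends the class of the above form to $f_{\mathbf{i}}^{1/p}x^{\mathbf{j}}\,dx_1\wedge\cdots\wedge dx_n$. Summing over all $\mathbf{i}$ of the admissible form yields precisely $\mathrm{Tr}(f)dx_1\wedge\cdots\wedge dx_n$ as stated.

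The only real step requiring care is the exactness argument in the off-diagonal case, because one must produce a genuine primitive in $F_{X*}\Omega^{n-1}$ for a formal power series rather than just a polynomial; this is handled by noting that the primitive written above converges in the $\mathfrak{m}_x$-adic topology and that the differential commutes with completion. Once this and the local description of $C^{-1}$ are in hand, the lemma falls out by summation.
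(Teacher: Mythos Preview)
The paper does not actually prove this lemma; it is quoted verbatim from \cite[Proposition~1.3.6]{kumarbrion} with only a citation and no argument. Your proposal is correct and is precisely the standard proof one finds in that reference: reduce to a monomial, show the off-diagonal monomials are exact because $i_k+1$ is a unit in $k$ when $i_k\not\equiv p-1\pmod p$, and identify the diagonal monomials via the explicit description $C^{-1}(g\,dx_1\wedge\cdots\wedge dx_n)=[g^p x_1^{p-1}\cdots x_n^{p-1}\,dx_1\wedge\cdots\wedge dx_n]$ of the inverse Cartier operator. There is nothing to compare against in the paper itself.
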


As we mentioned above, one can prove the isomorphism (\ref{iso1}) is nothing but the composite of (\ref{iso1'}) and a morphism $\iota$ given by the following
\begin{lemma}\label{le3}\cite[Proposition 1.3.7]{kumarbrion} The morphism
\begin{equation}\label{iota}\iota:\mathscr{H}om(\omega_X,F_{X*}\omega_X)\rightarrow \mathscr{H}om(F_{X*}\mathcal{O}_X,\mathcal{O}_X)\end{equation}
defined by $\iota(\psi)(f)\omega=\tau(f\psi(\omega))$, where $f$ is a section of $\mathcal{O}_X$, $\omega$ is a local generator of $\omega_X$, is an isomorphism.
\end{lemma}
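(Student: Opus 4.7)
The plan is to verify well-definedness, reduce to a local model using étale coordinates, and then explicitly identify $\iota$ as sending a basis to a basis via Lemma \ref{tr}.

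First I would check that $\iota(\psi)$ is $\mathcal{O}_X$-linear (with respect to the Frobenius-twisted structure on $F_{X*}\mathcal{O}_X$, where $a\cdot f = a^pf$) and is independent of the chosen local generator $\omega$. For linearity,
$$\iota(\psi)(a^pf)\,\omega \;=\; \tau(a^pf\,\psi(\omega)) \;=\; a\,\tau(f\,\psi(\omega)) \;=\; a\,\iota(\psi)(f)\,\omega,$$
using $\mathcal{O}_X$-linearity of $\tau$ with its twisted source. For independence, if $\omega'=u\omega$ with $u$ a unit, then $\psi(\omega') = u^p\psi(\omega)$ because $\psi$ is $\mathcal{O}_X$-linear into the twisted target, and hence $\tau(f\,\psi(\omega')) = u\,\tau(f\,\psi(\omega)) = u\,\iota(\psi)(f)\,\omega = \iota(\psi)(f)\,\omega'$.

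Since both source and target are locally free $\mathcal{O}_X$-modules of rank $p^n$ with $n=\dim X$, it suffices to check $\iota$ is an isomorphism locally. I would cover $X$ by affines admitting étale coordinates $x_1,\dots,x_n$ and set $\omega = dx_1\wedge\cdots\wedge dx_n$. Then $F_{X*}\mathcal{O}_X$ is free over $\mathcal{O}_X$ with basis $\{x^{\mathbf{a}} : 0\le a_i\le p-1\}$, so $\mathscr{H}om(F_{X*}\mathcal{O}_X,\mathcal{O}_X)$ has the dual basis $\{\lambda_{\mathbf{a}}\}$ characterized by $\lambda_{\mathbf{a}}(c^px^{\mathbf{b}}) = c\,\delta_{\mathbf{a,b}}$. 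Meanwhile $\mathscr{H}om(\omega_X, F_{X*}\omega_X)$ is free with basis $\{\psi_{\mathbf{a}}\}$, where $\psi_{\mathbf{a}}(\omega) = x^{\mathbf{a}}\omega$.

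The main computation: for $0\le a_i,b_i\le p-1$, Lemma \ref{tr} gives
$$\iota(\psi_{\mathbf{a}})(x^{\mathbf{b}})\,\omega \;=\; \tau(x^{\mathbf{a+b}}\omega) \;=\; \mathrm{Tr}(x^{\mathbf{a+b}})\,\omega.$$
Since $a_i+b_i\in[0,2p-2]$, the condition $\mathbf{a+b} = (p-1)\mathbf{1}+p\mathbf{j}$ with $\mathbf{j}\ge 0$ forces $\mathbf{j}=\mathbf{0}$ and $\mathbf{a+b}=(p-1)\mathbf{1}$, in which case $\mathrm{Tr}(x^{\mathbf{a+b}})=1$; otherwise the trace vanishes. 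Hence $\iota(\psi_{\mathbf{a}}) = \lambda_{(p-1)\mathbf{1}-\mathbf{a}}$, and since $\mathbf{a}\mapsto(p-1)\mathbf{1}-\mathbf{a}$ permutes $\{0,\dots,p-1\}^n$, $\iota$ carries a basis to a basis. The main obstacle I anticipate is the bookkeeping of the twisted versus untwisted $\mathcal{O}_X$-module structures on $F_{X*}\mathcal{O}_X$ and $F_{X*}\omega_X$; once that is organized, the explicit trace calculation is routine combinatorics.
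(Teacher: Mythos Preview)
The paper does not supply its own proof of this lemma; it is quoted directly from \cite[Proposition 1.3.7]{kumarbrion} and used as a black box. Your argument is correct and is, in fact, essentially the standard proof one finds in that reference: verify well-definedness, pass to an \'etale local chart with coordinates $x_1,\dots,x_n$, and use the explicit description of the trace map (Lemma~\ref{tr}) to see that $\iota$ carries the basis $\{\psi_{\mathbf a}\}$ of $\mathscr{H}om(\omega_X,F_{X*}\omega_X)$ bijectively to the dual basis $\{\lambda_{(p-1)\mathbf 1-\mathbf a}\}$ of $\mathscr{H}om(F_{X*}\mathcal{O}_X,\mathcal{O}_X)$.

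One small remark on the well-definedness step: your computation $\psi(\omega')=u^p\psi(\omega)$ is correct, but the justification could be phrased more carefully. The map $\psi$ is $\mathcal{O}_X$-linear, so $\psi(u\omega)=u\cdot\psi(\omega)$ where the right-hand action is the $\mathcal{O}_X$-module structure on $F_{X*}\omega_X$; unwinding that pushforward structure gives $u\cdot\eta=u^p\eta$ when $\eta$ is regarded as an honest section of $\omega_X$. This is exactly what you wrote, but it is worth spelling out that the equality $\psi(u\omega)=u^p\psi(\omega)$ holds at the level of underlying sections, not as an identity of $\mathcal{O}_X$-module elements in $F_{X*}\omega_X$ (where it would read $\psi(u\omega)=u\psi(\omega)$). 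Once that bookkeeping is clear, the rest of the argument is clean.
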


\subsection{Toric Bundles}\hfill

\smallskip In this part we will recall some facts on toric bundles, among which the \emph{Klaychko data} of a toric bundle will play a key role later in the proof of our theorems.

A toric variety is a normal algebraic variety on which there is an action of a torus such that over an open subset of this variety the action is free. Any affine toric variety can be constructed as follows. Let $N$ be a free abelian group of rank $n$, $\sigma\subseteq N\otimes_{\mathds{Z}}\mathds{R}$ be a strongly convex rational polyhedral cone, $\sigma\spcheck$ be the set of vectors in $N\spcheck\otimes_{\mathds{Z}}\mathds{R}$ which take nonnegative values on $\sigma$. Then $S_{\sigma}=\sigma\spcheck\cap N\spcheck$ is commutative semigroup and $U_{\sigma}=\mathrm{Spec}\,k[S_{\sigma}]$ is an affine toric variety. Given a general toric variety of dimension $n$, there exists a fan $\Sigma$ in $N\otimes_{\mathds{Z}}\mathds{R}$ from which one can obtain the toric variety by firstly constructing the affine toric varieties $U_{\sigma}$ for each cone $\sigma\subseteq\Sigma$ as above and then glueing $\{U_{\sigma}\}_{\sigma\in \Sigma}$.

\smallskip A toric bundle on $X$ is defined to be a vector bundle $\pi:\mathscr{E}\rightarrow X$ endowed with an action of $T$ which is compatible with the action of $T$ on $X$. In the sequel, by a toric bundle we mean the associated locally free sheaf.

\subsubsection*{$\mathrm{2.2.0.}$ Toric bundles over Affine Toric varieties}\hfill

\smallskip Given an affine toric variety $U_{\sigma}$, one can show \cite[Proposition 2.1.1]{Klyachko1990} a toric bundle $\mathcal{E}$ on $U_{\sigma}$ is a direct sum of toric line bundles whose underlying bundles are trivial. In other words, we have the following isomorphism of toric bundles
\begin{equation}\label{dec0}\mathcal{E}\cong\bigoplus_{\smash\chi_i\in\widehat{T},1\leq i\leq n}\chi_i\mathcal{O}_{U_{\sigma}}.\end{equation}

\noindent The underlying line bundle of $\chi_i\mathcal{O}_{U_{\sigma}}$ is $\mathcal{O}_{U_{\sigma}}$ and the action of $T$ is given $t.e_{\sigma}=\chi_i(t)e_{\sigma}$, where $e_{\sigma}$ is the unit of $\mathcal{O}_{U_{\sigma}}$. The inverse images in $\mathcal{E}$ of the units in the direct summands on the right side constitute a basis and we will denote it by $\{e_{\sigma}\}$. This basis will be called an \emph{eigen-basis} of $\mathcal{E}$, and the characters $\{\chi_i\}$ will be called \emph{eigen-characters} of $\mathcal{E}$.

\smallskip The eigen-characters appearing in the decomposition (\ref{dec0}) might not be uniquely determined, for instance any two toric line bundles are isomorphic on a torus. Indeed, can also show loc.cit. the toric bundle structure is uniquely determined by the induced representation of $T_{\sigma}$ on $\mathcal{E}_x$, where $x$ is a point of the unique closed orbit of $U_{\sigma}$ and $T_{\sigma}\subseteq T$ be the stabilizer subgroup of $x$. Moreover, the induced representation of $T_{\sigma}$ on the fiber $\mathcal{E}_x$ can be extended to a representation of $T$ and we take one such representation $\varphi_{\sigma}$.  Then the image of $\{e_{\sigma}\}$ in $\mathcal{E}_x$ are exactly the eigenvectors of the representation $\varphi_{\sigma}$.

\subsubsection{Klyachko Data of a Toric Bundle}\hfill

\smallskip The \emph{Klyachko data} of a toric bundle is a vector space endowed with a family of compatible filtrations as described in the theorem below.
\begin{theorem}\cite[Theorem 2.2.1]{Klyachko1990}\label{kl} To give a toric bundle of rank $r$ on a toric variety $X(\Sigma)$ is equivalent to give a vector space $E$ of dimension $r$ on which there exists a family of compatible filtrations $\{E^{\alpha}(i)\}_{\alpha\in|\Sigma|}$ of $E$ indexed by the primitive vectors of $\Sigma$ satisfying the following condition.

\smallskip For each cone $\sigma\in\Sigma$, there exists a decomposition
\begin{equation}\label{dec}E\cong\bigoplus_{\chi\in \widehat{T}_{\sigma}}E^{[\sigma]}(\chi),\end{equation}
\noindent where $\widehat{T}_{\sigma}$ is the group of characters of the stabilizer subgroup $T_{\sigma}\subseteq T$ of any point in the unique closed orbit of $U_{\sigma}$, such that
$$E^{\alpha}(i)\cong\bigoplus_{\langle\chi,\alpha\rangle\geq i}E^{[\sigma]}(\chi)$$
\noindent for any $\alpha\in|\sigma|$ and $i\in \mathds{Z}$.
\end{theorem}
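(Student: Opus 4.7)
The plan is to establish this as an equivalence of categories by constructing functors in both directions and showing they are mutually inverse. The engine that drives everything is the affine result recalled in subsection 2.2.0: every toric bundle on $U_\sigma$ splits as a sum $\bigoplus \chi_i \mathcal{O}_{U_\sigma}$, and this decomposition is controlled by the $T_\sigma$-representation on the fiber over the closed orbit of $U_\sigma$.

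Forward direction (toric bundle $\Rightarrow$ Klyachko data): I would fix once and for all a base point $x_0$ in the open torus orbit and set $E := \mathcal{E}_{x_0}$, which carries no canonical grading. For each cone $\sigma$, the local decomposition (\ref{dec0}) produces an eigen-basis $\{e_\sigma\}$ on $U_\sigma$; evaluating at $x_0$ and grouping the basis vectors by their restriction character $\chi|_{T_\sigma}\in\widehat T_\sigma$ yields the $\widehat T_\sigma$-graded decomposition $E \cong \bigoplus_{\chi\in\widehat T_\sigma} E^{[\sigma]}(\chi)$ required in (\ref{dec}). The well-definedness of the summands (not just the basis) is precisely the statement in 2.2.0 that the toric structure is determined by the $T_\sigma$-representation on the closed-orbit fiber, so $E^{[\sigma]}(\chi)$ depends only on $\mathcal{E}$ and $\sigma$. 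For a primitive vector $\alpha \in |\sigma|$, I would then \emph{define} $E^{\alpha}(i) := \bigoplus_{\langle\chi,\alpha\rangle\geq i} E^{[\sigma]}(\chi)$, i.e., by the filtration by weights of the one-parameter subgroup $\lambda_\alpha$.

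The nontrivial compatibility is that $E^\alpha(i)$ is independent of the cone $\sigma$ chosen to contain $\alpha$. For this I would compare the eigen-decompositions on $U_{\sigma_1}$ and $U_{\sigma_2}$ via the common face $\tau = \sigma_1 \cap \sigma_2$: both restrict to the eigen-decomposition on $U_\tau$, and a character $\chi\in \widehat T_{\sigma_i}$ has a well-defined pairing with $\alpha$ since $\alpha$ lies in $\tau$, so the grading by $\langle\chi,\alpha\rangle$ is intrinsic. This gives simultaneously the family $\{E^\alpha(i)\}$ and the compatibility condition in the theorem (since the cone-wise decomposition displayed in (\ref{dec}) is exactly what we used to build the filtration).

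Reverse direction (Klyachko data $\Rightarrow$ toric bundle): Given the data, for each cone $\sigma$ I would use the decomposition (\ref{dec}) and the construction of 2.2.0 to build a toric bundle $\mathcal{E}_\sigma := \bigoplus_\chi E^{[\sigma]}(\chi) \otimes_k \chi\mathcal{O}_{U_\sigma}$ on $U_\sigma$. On a face $\tau \preceq \sigma$ the group $T_\sigma \subseteq T_\tau$, so characters of $T_\tau$ restrict to characters of $T_\sigma$ and the decomposition $E^{[\tau]}(\chi') = \bigoplus_{\chi|_{T_\sigma}=\chi'}E^{[\sigma]}(\chi)$ follows from the compatibility hypothesis applied to the primitive generators of $\tau$; this yields a canonical $T$-equivariant isomorphism $\mathcal{E}_\sigma|_{U_\tau} \cong \mathcal{E}_\tau$. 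I would check these satisfy the cocycle condition on triple intersections $U_{\sigma_1}\cap U_{\sigma_2}\cap U_{\sigma_3} = U_{\sigma_1\cap\sigma_2\cap\sigma_3}$, which again reduces to the uniqueness clause of 2.2.0, and then glue to obtain a global toric bundle $\mathcal{E}$.

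Finally, I would verify that the two constructions are mutually inverse: starting from a bundle, the $E^{[\sigma]}(\chi)$ read off in the forward direction are tautologically those used to reassemble $\mathcal{E}_\sigma$, and the gluing reproduces $\mathcal{E}$ up to canonical isomorphism; starting from filtrations, the forward functor applied to the reconstructed bundle returns the same decompositions (\ref{dec}) hence the same filtrations. The main obstacle I anticipate is the compatibility step on common faces: making rigorous that the $\widehat T_\sigma$-grading of $E$ and the $\widehat T_\tau$-grading for a face $\tau\preceq\sigma$ refine one another in the predicted way, and that this refinement is forced by, and in turn implies, the filtration compatibility indexed over all primitive vectors of the fan. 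Once this bookkeeping is in place, both directions and the inverse property are formal consequences of the affine classification recalled in 2.2.0.
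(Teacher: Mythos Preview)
The paper does not prove this theorem: it is quoted from \cite{Klyachko1990} and the paper only \emph{briefly recalls} the forward direction (bundle $\Rightarrow$ filtrations) in the paragraph following the statement, without treating the reverse direction or the equivalence. Your forward construction is essentially the same as the paper's sketch---build the $\widehat T_\sigma$-decomposition from the affine splitting (\ref{dec0}) and define $E^\alpha(i)$ by the weight filtration---with one cosmetic difference: you take $E=\mathcal E_{x_0}$ for a fixed point $x_0$ of the open torus, whereas the paper works with the fiber over a closed-orbit point of each $U_\sigma$ and extends the $T_\sigma$-representation to $T$. Your choice has the advantage that $E$ is literally one vector space from the start, so no identification of fibers is needed. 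Your reverse construction and the mutual-inverse check go beyond what the paper records, and the outline is sound.

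One genuine slip to fix in the gluing paragraph: for a face $\tau\preceq\sigma$ the stabilizers satisfy $T_\tau\subseteq T_\sigma$ (the span of $\tau$ sits inside the span of $\sigma$), not the reverse. Hence it is characters of $T_\sigma$ that restrict to characters of $T_\tau$, the $\widehat T_\sigma$-grading of $E$ refines the $\widehat T_\tau$-grading, and the correct formula is
\[
E^{[\tau]}(\chi')\;=\;\bigoplus_{\substack{\chi\in\widehat T_\sigma\\ \chi|_{T_\tau}=\chi'}} E^{[\sigma]}(\chi).
\]
With this correction your isomorphism $\mathcal E_\sigma|_{U_\tau}\cong\mathcal E_\tau$ and the cocycle check go through as you describe.
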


Next we only recall briefly how to obtain the Klyachko data of a toric bundle. Firstly we assume $X$ is affine. Let $x$ be a point in the unique closed orbit of $X$, then we have an induced representation of $T_{\sigma}$, where $T_{\sigma}\subseteq T$ is the stabilizer subgroup of $x$. Now we extend this representation to a representation of $T$ on $\mathcal{E}_x$ and let $E_{\chi}$ be the $\chi$-isotropic subspace of this representation. Then for any $\alpha\in|\Sigma|$
\begin{equation}\label{eri}E^{\alpha}(i)=\bigoplus_{\langle\chi,\alpha\rangle\geq i}E_{\chi}.\end{equation}

\smallskip For a toric bundle $\mathcal{E}$ on a general toric variety $X$, one can choose a family of open affine sub toric varieties $\{U_{\sigma}\}$ covering $X$. Then define a representation $\varphi_{\sigma}$ of $T$ on $\mathcal{E}_{x_{\sigma}}$ as in section 2.2.0., where $x_{\sigma}$ is a point of the closed orbit of $U_{\sigma}$. Then one can obtain a filtration as above for $\alpha\in|\sigma|=\sigma\cap|\Sigma|$ hence for all $\alpha\in|\Sigma|$. One can prove the filtration $\{E^{\alpha}(i)\}$ obtained in this way is independent on the choice of $\sigma$.

\subsubsection{A Toric Bundle as a 1-coycle}\hfill

\smallskip Recall that any vector bundle of rank $n$ on an algebraic variety can be represented by a 1-cocycle of the sheaf of functions with values in $\mathcal{GL}_n$. Such representations can be particularly simple for a toric bundle, which has been worked out by Kaneyama in \cite{Kaneyama1975}.

\begin{pro}\cite[Theorem 4.2]{Kaneyama1975}\label{ka} Let $X=X(\Sigma)$ be a toric variety and $\mathcal{E}$ be a toric bundle on $X$. Then for each open affine sub toric variety $U_{\sigma}$, $\sigma\in\Sigma$, there exists a basis $\{e_{\sigma}\}$ of $\mathcal{E}|_{U_{\sigma}}$ such that for two cones $\sigma, \tau\in\Sigma$, the transition matrix of $\mathcal{E}$ over $U_{\sigma}\cap U_{\tau}$ with respect to $\{e_{\sigma}\}$, $\{e_{\tau}\}$
can be written as
\begin{equation}\label{tm}\mathrm{Diag}(\chi^1_{\sigma},\cdots,\chi^r_{\sigma})P_{\sigma\tau}
\mathrm{Diag}(\chi^1_{\tau},\cdots,\chi^r_{\tau})^{-1},
\end{equation}
\noindent where
\begin{enumerate}
\item
$\chi^i_{\sigma}$ (resp. $\chi^i_{\tau}$), $1\leq i\leq r$, are the characters of $T$;
\item
$P_{\sigma\tau}\in GL(r,k)$ and
\item
 the set of matrices $\{P_{\sigma\tau}\}_{\sigma, \tau\in\Sigma}$ satisfy the usual cocycle conditions.
\end{enumerate}
\end{pro}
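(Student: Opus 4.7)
The plan is to construct the bases $\{e_\sigma\}$ locally from the structural decomposition (\ref{dec0}) of a toric bundle over an affine toric variety, and then extract the asserted factorization (\ref{tm}) by exploiting the torus-equivariance of these bases on double overlaps. The cocycle condition for $\{P_{\sigma\tau}\}$ will then be automatic, forced by the cocycle condition for the ordinary transition matrices of $\mathcal{E}$.

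For the local construction, fix $\sigma\in\Sigma$. By (\ref{dec0}) there is a $T$-equivariant isomorphism $\mathcal{E}|_{U_\sigma}\cong\bigoplus_{i=1}^r\chi^i_\sigma\mathcal{O}_{U_\sigma}$; let $e^i_\sigma$ denote the preimage of the unit of the $i$th summand. These sections form an $\mathcal{O}_{U_\sigma}$-basis of $\mathcal{E}|_{U_\sigma}$ that is simultaneously a family of $T$-eigensections, $t\cdot e^i_\sigma=\chi^i_\sigma(t)\,e^i_\sigma$. Define the transition matrix on $U_\sigma\cap U_\tau$ by $e^j_\tau=\sum_i(A_{\sigma\tau})_{ij}\,e^i_\sigma$. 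Applying $t\in T$ to both sides and comparing yields
\[
t\cdot(A_{\sigma\tau})_{ij}=\chi^j_\tau(t)\,\chi^i_\sigma(t)^{-1}\,(A_{\sigma\tau})_{ij},
\]
so every entry is a $T$-eigenvector in $\Gamma(U_\sigma\cap U_\tau,\mathcal{O})=k[S_{\sigma\cap\tau}]$. Since $T$ acts on this semigroup algebra through one-dimensional weight spaces indexed by characters, each $(A_{\sigma\tau})_{ij}$ must be a scalar multiple $c_{ij}\cdot\chi^i_\sigma(\chi^j_\tau)^{-1}$ of a single monomial (with $c_{ij}=0$ in case this monomial fails to extend to $U_\sigma\cap U_\tau$). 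Setting $P_{\sigma\tau}=(c_{ij})$ gives precisely the factorization (\ref{tm}), and invertibility of $A_{\sigma\tau}$ on $U_\sigma\cap U_\tau$ forces $P_{\sigma\tau}\in GL(r,k)$.

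The cocycle identity then falls out mechanically: from $A_{\sigma\tau}A_{\tau\rho}=A_{\sigma\rho}$ on triple overlaps, substituting the factorizations and canceling the middle product $\mathrm{Diag}(\chi^i_\tau)^{-1}\mathrm{Diag}(\chi^i_\tau)=I$ leaves $P_{\sigma\tau}P_{\tau\rho}=P_{\sigma\rho}$. I do not foresee a genuine obstacle; the only delicate point is bookkeeping with the sign conventions for the $T$-action on functions versus on sections, which determines whether the eigenvalue in the displayed equation above is $\chi^j_\tau(t)\chi^i_\sigma(t)^{-1}$ or its inverse, and correspondingly whether the diagonal factors in (\ref{tm}) appear as written or with the roles of the two diagonals exchanged. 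This is a convention-level matter and does not affect the substance of the argument.
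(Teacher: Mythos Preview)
Your proposal is correct and follows essentially the same line as the paper's proof: both choose the eigen-bases coming from the decomposition (\ref{dec0}), use $T$-equivariance to see that each entry of the transition matrix is a scalar multiple of the monomial $\chi^i_\sigma(\chi^j_\tau)^{-1}$, and then read off the cocycle condition for the constant matrices $P_{\sigma\tau}$. The only cosmetic difference is that the paper packages the equivariance as the functional equation $f_{\sigma\tau}(tx)=\varphi_\sigma(t)f_{\sigma\tau}(x)\varphi_\tau(t)^{-1}$ and specializes to a fixed point $x_0\in T$ to extract $P_{\sigma\tau}=f_{\sigma\tau}(x_0)$ (up to diagonal conjugation), whereas you argue directly via the one-dimensional weight spaces of $k[S_{\sigma\cap\tau}]$; evaluating at such an $x_0$ is also what makes your invertibility step for $P_{\sigma\tau}$ rigorous.
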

\begin{proof} Since a toric bundle on an affine toric variety is a direct sum of toric line bundles (\ref{dec0}), we have
\begin{equation}\label{dec1}\mathcal{E}|_{U_{\sigma}}\cong\bigoplus_{1\leq i\leq r}{\chi^i_{\sigma}}\mathcal{O}_{U_{\sigma}},\end{equation}

\noindent where $\chi^i_{\sigma}\in\widehat{T}, 1\leq i\leq r$. As in section 2.2.0, we take $\{e_{\sigma}\}$ to be the eigen-basis provided by (\ref{dec1}). Moreover, we take $x_{\sigma}$ to be a closed point of the unique closed orbit of $U_{\sigma}$, then we have an induced representation of $T_{\sigma}$ on $\mathcal{E}_{x_{\sigma}}$ and take $\varphi_{\sigma}$ to be an extension of this representation to $T$.

Let $f_{\sigma\tau}$ be the transition matrix of $\mathcal{E}$ over $U_{\sigma}\cap U_{\tau}$ from the basis $\{e_{\tau}\}$ to $\{e_{\sigma}\}$. Since $U_{\sigma}\cap U_{\tau}$ is a toric variety with an action of $T$, for any $t\in T$ and $x\in U_{\sigma}\cap U_{\tau}$ we have
\begin{equation}\label{tm1}
f_{\sigma\tau}(tx)=\varphi_{\sigma}(t)f_{\sigma\tau}(x)\varphi_{\tau}(t)^{-1}.
\end{equation}
If take $x$ in (\ref{tm1}) to be point in $T\subseteq U_{\sigma}\cap U_{\tau}$, then it is easy to see over the open subset $T$ of $U_{\sigma}\cap U_{\tau}$, the transition matrix $f_{\sigma\tau}$
can be written in the form of (\ref{tm}). Thus over $U_{\sigma}\cap U_{\tau}$, $f_{\sigma\tau}$ can be written in the form (\ref{tm}).

Now we fix a point $x_0$ in $T$, then one sees easily the transition matrices $\{f_{\sigma\tau}(x_0)\}$ satisfies the conditions (2) and (3) hence the proposition is proved.

\end{proof}

\subsubsection{Global Sections of a Toric Bundle}\hfill

\smallskip Suppose $X$ is a complete toric variety, $\mathcal{E}$ be a toric bundle on $X$. Then there is an induced action of $T$ on $H^0(X,\mathcal{E})$. Let $H^0(X,\mathcal{E})_{\chi}$ be the $\chi$-isotropy component of $H^0(X,\mathcal{E})$, then we have the following
\begin{pro}\label{pgs} Let $X(\Sigma)$ be a smooth complete toric variety and $\mathcal{E}$ be a toric bundle on $X$, then
\begin{equation} H^0(X,\mathcal{E})_{\chi}=\bigcap_{\alpha\in|\Sigma|}E^{\alpha}(\langle\chi,\alpha\rangle).
\end{equation}
\end{pro}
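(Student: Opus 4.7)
The plan is to identify $H^0(X,\mathcal{E})_\chi$ with its image in the fixed vector space $E$ under restriction to the open torus, and then check extension chart-by-chart on the affine cover by maximal cones. Since $T$ acts freely on itself, $\mathcal{E}|_T \cong E \otimes_k \mathcal{O}_T$ canonically as a $T$-equivariant bundle (with $T$ acting only on $\mathcal{O}_T$). A weight-$\chi$ global section $s$ thus restricts to $v \otimes \chi$ on $T$ for a unique $v \in E$, and since $T$ is dense in $X$ the assignment $s \mapsto v$ is injective. Characterizing the image reduces, via the cover $\{U_\sigma\}_{\sigma \in \Sigma}$ together with the inclusions $U_\tau \subset U_\sigma$ for $\tau \preceq \sigma$, to testing extension over each maximal $\sigma$.

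Fix such a maximal cone $\sigma$ and, by Proposition \ref{ka}, pick an eigen-basis $\{e_\sigma^i\}$ of $\mathcal{E}|_{U_\sigma}$ with eigen-characters $\chi_\sigma^i \in \widehat{T}$. Writing $v = \sum_i c_i\, e_\sigma^i|_{\mathrm{id}}$ in terms of the fiber at the identity of $T$, the section $s|_T$ becomes $\sum_i c_i \cdot \chi(\chi_\sigma^i)^{-1} \cdot e_\sigma^i$; its $i$-th summand extends to a regular section on $U_\sigma$ iff $c_i = 0$ or the character $\chi(\chi_\sigma^i)^{-1}$ lies in $S_\sigma = \sigma^\vee \cap M$, equivalently iff $\langle \chi_\sigma^i - \chi, \alpha\rangle \geq 0$ for every primitive ray $\alpha \in |\sigma|$. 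Hence $s$ extends over $U_\sigma$ precisely when the $i$-th coefficient of $v$ in the basis $\{e_\sigma^i|_{\mathrm{id}}\}$ vanishes unless $\langle \chi_\sigma^i, \alpha\rangle \geq \langle \chi, \alpha\rangle$ for all $\alpha \in |\sigma|$.

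To translate this into the Klyachko formulation, group the eigen-basis vectors by the restriction of their characters to $T_\sigma$; this recovers the decomposition $E \cong \bigoplus_{\chi' \in \widehat{T}_\sigma} E^{[\sigma]}(\chi')$ of Theorem \ref{kl}, under which $E^\alpha(\langle \chi, \alpha\rangle) = \bigoplus_{\langle \chi', \alpha\rangle \geq \langle \chi, \alpha\rangle} E^{[\sigma]}(\chi')$ for $\alpha \in |\sigma|$. The extension condition above becomes exactly $v \in \bigcap_{\alpha \in |\sigma|} E^\alpha(\langle \chi, \alpha\rangle)$; intersecting over the maximal cones, whose rays exhaust $|\Sigma|$, yields the claimed formula. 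The step needing the greatest care will be the bookkeeping of sign conventions that relate the $T$-action on coordinate rings to $T$-characters, and the verification that restriction of eigen-characters $\chi_\sigma^i$ to $T_\sigma$ is compatible with the $T_\sigma$-grading underlying the filtrations $E^\alpha$; beyond this, the result follows directly from the Klyachko correspondence and the standard description of sections of equivariant line bundles on affine toric varieties.
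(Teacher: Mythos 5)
Your proof is correct and follows the standard route for this result: restrict to the open torus $T$, identify a weight-$\chi$ section with a vector $v$ in the reference fiber $E$ via the canonical $T$-equivariant trivialization $\mathcal{E}|_T \cong E \otimes \mathcal{O}_T$, and then check cone-by-cone (using the maximal cones, which suffices because $\Sigma$ is complete so their rays exhaust $|\Sigma|$) that $s$ extends over $U_\sigma$ iff the coefficient Laurent monomial of each eigen-basis vector is regular on $U_\sigma \cong \mathbb{A}^n$, which translates directly into the Klyachko filtration condition. The paper does not give an explicit proof of this proposition, but the computation developed in its section 2.2.3 immediately after the statement — deriving $a_i = c_i \prod_j u_j^{\chi^i_\sigma(\sigma_j)-\chi(\sigma_j)}$ for the coefficient of $e^i_\sigma$ in $\rho_\sigma(s)$ — is exactly the key ingredient you use, so your argument matches the approach the paper implicitly has in mind.

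One small sign slip in the middle of your argument: you write the coefficient of $e^i_\sigma$ in $s|_T$ as the character $\chi (\chi^i_\sigma)^{-1}$, and then claim membership of that character in $S_\sigma = \sigma^\vee \cap M$ is equivalent to $\langle \chi^i_\sigma - \chi, \alpha\rangle \geq 0$. These two formulas are inconsistent as written. In fact the coefficient monomial is $u^{\chi^i_\sigma - \chi}$ (equivalently, its $T$-module weight is $\chi - \chi^i_\sigma$, because $u^m$ has weight $-m$ under the natural $T$-action on $\mathcal{O}_T$), and the regularity condition is $\chi^i_\sigma - \chi \in S_\sigma$, i.e.\ $\langle \chi^i_\sigma - \chi, \alpha\rangle \geq 0$ for all $\alpha \in |\sigma|$. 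So the inequality you end up feeding into the Klyachko filtration is the correct one; only the identification of the coefficient character is stated with the wrong sign. Also, note that the second concern you flag — compatibility of restricting $\chi^i_\sigma$ to $T_\sigma$ with the grading — disappears once you reduce to maximal cones, since there $T_\sigma = T$ and no restriction is involved.
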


\smallskip Assuming the conditions in the above proposition, next we consider the image of a global section $s\in H^0(X,\mathcal{E})_{\chi}$ under the restriction map $\rho_{\sigma}:\Gamma(X,\mathcal{E})\rightarrow\Gamma(U_{\sigma},\mathcal{E})$, where $U_{\sigma}$ is an open affine sub toric variety of $X$.

Since $X$ is smooth and complete, it can be covered by sub toric varieties isomorphic to $\mathbb{A}^n$ with $n=\dim X$. Let $U_{\sigma}$ be such a sub toric variety, then $\mathcal{E}|_{U_{\sigma}}\cong\bigoplus_{1\leq i\leq r}\chi_i\mathcal{O}_{U_{\sigma}}$ and $\{e^i_{\sigma}\}$ be the associated eigen-basis of $\mathcal{E}|_{U_{\sigma}}$ as described in (\ref{dec0}). Then an element of $\Gamma(U_{\sigma},\mathcal{E})$ can be written as
$$v=\sum_{1\leq i\leq r}a_ie^i_{\sigma},\ \ \ a_i\in k[u_1,\cdots,u_n],$$
where $u_i\in \widehat{T}$ satisfying $\langle u_i,\sigma_j\rangle=\delta_{ij}$ for $\{\sigma_1,\cdots,\sigma_n\}=|\sigma|$.

Now we investigate the action of $T$ on $v$. Firstly, the action of $t=(t_1,\cdots,t_n)\in T\cong \mathbb{G}^n_{\mathbf{m}}$ on a monomial $b=u_1^{i_1}\cdots u_n^{i_n}$ is given by
\begin{equation}\label{chneg}t.b=\chi_b(t)b,\ \ \ \chi_b(t_j)=-i_j.
\end{equation}
Let $a_i=\sum_j c_{ij}b_{ij}$, where $c_{ij}\in k$, $b_{ij}\in k[u_1,\cdots,u_n]$ is a monomial, then the action of $t\in T$ on $v$ is given by
\begin{equation}\label{chneg1}t.v=\sum_i\sum_jc_{ij}\chi_{b_{ij}}(t)b_{ij}\chi^i_{\sigma}(t)e^i_{\sigma}\end{equation}

If $v=\rho_{\sigma}(s)$ for some $s\in H^0(X,\mathcal{E})_{\chi}$, then $t.e=\chi(t)e$ and one sees easily $a_i$ is a monomial for $1\leq i\leq r$. Then by (\ref{chneg})and (\ref{chneg1}), we have $\chi_{a_i}=\chi-\chi^i_{\sigma}$ and $a_i=c_i\prod_iu_i^{\chi^i_{\sigma}(\sigma_i)-\chi(\sigma_i)}$ for some $c_i\in k$.

\subsubsection{Examples}\hfill

\begin{exam}The cotangent and tangent bundle of a smooth toric variety.
\end{exam}
Given a smooth toric variety $X=X(\Sigma)$ of dimension $n$, the cotangent bundle $\Omega^1_X$ is a toric bundle on $X$. The associated Klyachko data is given by
\begin{equation}\label{cb}
E^{\alpha}(i)=
\begin{cases}
\Omega=\widehat{T}\otimes k, &\text{if $i\leq-1$;}\\
\{\omega\in\Omega\,|\,\langle\omega,\alpha\rangle=0\}, &\text{if $i=0$;}\\
0, &\text{if $i>0$.}
\end{cases}
\end{equation}

It suffices to prove (\ref{cb}) for affine smooth toric varieties. In this case, $X$ is isomorphic to $\mathbb{A}^d\times \mathbb{G}_{\mathbf{m}}^{n-d}$ and can be realized as the affine toric variety associated to the convex cone generated by the vectors $e_1,\cdots,e_d$ in $\widehat{T}^0\otimes_{\mathds{Z}}\mathds{R}$ if $d\geq 1$ and by origin if $d=0$.

Suppose the immersion $T\hookrightarrow X$ is given by
$$k[s_1,\cdots,s_d,s_{d+1},s_{d+1}^{-1},\cdots, s_n, s_n^{-1}]\hookrightarrow k[s_1,s_1^{-1},\cdots,s_n,s_n^{-1}].$$

Let $t=(t_1,\cdots, t_n)$be an element of $T\cong\mathbb{G}^n_{\mathbf{m}}$, then the action of $t$ on $\mathcal{O}_X$ is given by
\begin{equation}t.s_i=t_i^{-1}s_i,\;\;\; 1\leq i\leq n
\end{equation}
Then the action of $t$ on $ds_i$ is given by
\begin{equation}t.ds_i=t_i^{-1}ds_i,\;\;\; 1\leq i\leq n
\end{equation}
\noindent hence the Klyachko data of $\Omega^1_X$ given by (\ref{cb}) follows from (\ref{eri}).

\smallskip The tangent bundle $\mathcal{T}_X$ is also a toric bundle on $X$ and over any open affine sub toric variety of $X$ the associated
representation of $\mathcal{T}_X$ is the duality of the representation associated to $\Omega^1_X$. Thus the Klyachko data of $\mathcal{T}_X$ is given
by
\begin{equation}
E^{\alpha}(i)=
\begin{cases}
\mathscr{T}=\widehat{T}^0\otimes k, &\text{if $i\leq0$;}\\
k\alpha, &\text{if $i=1$;}\\
0, &\text{if $i>1$.}
\end{cases}
\end{equation}

\begin{exam}The symmetric product of a toric bundle.
\end{exam}
Let $\mathcal{E}$ be a toric bundle on a toric variety $X$ whose Klyachko data is given by $\{E^{\alpha}(i)\}_{\alpha\in|\Sigma|}$, $i\in\mathds{Z}$ as in
theorem \ref{kl}. As in (\ref{dec0}), we have the following decomposition
$$\mathcal{E}|_{U_{\sigma}}\cong\bigoplus_{\chi^i_{\sigma}\in\Xi_{\sigma}}\chi^i_{\sigma}\mathcal{O}_{U_{\sigma}},$$

\noindent where $\Xi_{\sigma}\subseteq\widehat{T}$ is the set of eigen-characters (counted with multiplicity). Then a decomposition of $S^m\mathcal{E}|_{U_{\sigma}}$ is given by
\begin{equation}\label{decse}
S^m\mathcal{E}|_{U_{\sigma}}\cong\bigoplus_{\chi^{i_1}_{\sigma},\cdots,\chi^{i_m}_{\sigma}\in\,\Xi_{\sigma}}(\chi_{\sigma}^{i_1}+\cdots+\chi_{\sigma}^{i_m})\mathcal{O}
_{U_{\sigma}}
\end{equation}
We denote by $E$ be the fiber of $\mathcal{E}$ over a point $x_{\sigma}$ in the unique closed orbit in $U_{\sigma}$, then the image be of the eigen-basis $\{e^i_{\sigma}\}$ are the eigenvector of the action of $T$ on $E$. We still use $\{e^i_{\sigma}\}$ to denote their images in $E$, then the Klyachko data of the toric bundle $S^m\mathcal{E}$ is given by
\begin{equation}\label{klsme}
(S^mE)^{\alpha}(i)\cong\bigoplus_{\substack{\chi^{i_1}_{\sigma},\cdots,\chi^{i_m}_{\sigma}\in\Xi_{\sigma},\,i_1\leq\cdots\leq i_m;\\
\langle\chi^{i_1}_{\sigma},\alpha\rangle
+\cdots+\langle\chi^{i_m}_{\sigma},\alpha\rangle\geq i}}ke^{i_1}_{\sigma}\cdots e^{i_m}_{\sigma}, \ \ \ \alpha\in|\sigma|.
\end{equation}

\begin{exam}The determinant of a toric bundle.
\end{exam}
Let $\mathcal{E}$ be a toric bundle on a toric variety $X$ whose Klyachko data is given by $\{E^{\alpha}(i)\}_{\alpha\in|\Sigma|}$, $i\in \mathds{Z}$ as in
theorem \ref{kl}. Then the Klyachko data associated to the line bundle $\det \mathcal{E}$ is given by the function
\begin{equation}\label{decdet}
\rho\mapsto\sum_{i\in I^{\alpha}(\mathcal{E})}(\mathrm{dim}E^{\alpha}(i)/\mathrm{dim}E^{\alpha}(i+1)i).
\end{equation}

\section{Proof of the Theorems}

\smallskip Based on the criteria given in the preceding section, next we will prove our theorems.

\begin{proof}[Proof of Theorem \ref{mt}] By proposition \ref{ka}, $\mathcal{E}$ can be represented by an element of $H^1(X,\mathcal{GL}_2)$ whose transition matrices are of the form (\ref{tm}). In order to prove $\mathbb{V}(\mathcal{E})$ is compatibly \emph{F}-split with $X$, it suffices to prove for each $M_{\sigma\tau}, \sigma,\tau\in\Delta$ its entries satisfy the condition of Lemma \ref{le}.

\smallskip Let $M_{\sigma\tau}=\binom{a_{11}\,a_{12}}{a_{21}\,a_{22}}$, then by (\ref{tm}) we have $a_{ij}=\chi^i_{\sigma}p_{ij}(\chi^j_{\tau})^{-1}$, where $p_{ij}\in k$ is the $ij$-th entry of $P_{\sigma\tau}$. Then
$a_{11}^{s}a_{21}^{p-s}=p^s_{11}p^{p-s}_{21}(\chi^1_{\sigma})^s(\chi^2_{\sigma})^{p-s}(\chi^1_{\tau})^{-p}$. Note that though the element $(\chi^1_{\sigma})^s(\chi^2_{\sigma})^{p-s}$ may not be defined over $U_{\sigma}\cap U_{\tau}$, we have
\begin{equation}\varphi(a_{11}^{s}a_{21}^{p-s})=
\varphi(p^s_{11}p^{p-s}_{21}(\chi^1_{\sigma})^s(\chi^2_{\sigma})^{p-s}(\chi^1_{\tau})^{-p})=p^s_{11}p^{p-s}_{21}
(\chi^1_{\tau})^{-1}\varphi((\chi^1_{\sigma})^s(\chi^2_{\sigma})^{p-s})
\end{equation}

\noindent over the open subset of $U_{\sigma}\cap U_{\tau}$ where $\chi^1_{\sigma}$ is invertible. Therefore, we only need to show $\varphi((\chi^1_{\sigma})^s(\chi^2_{\sigma})^{p-s})=0$ for all $1\leq s\leq p-1$.

\smallskip Since the toric variety $X$ is smooth, the affine toric variety $U_{\sigma}\cap U_{\tau}$ is isomorphic to $\mathbb{A}^d\times\mathbb{G}^{n-d}_m$ for some integer $0\leq d\leq n$. To be more precise, let $\alpha_i, 1\leq i\leq d$
be the primitive generators of $\sigma$ which constitutes part of a $\mathds{Z}$-basis $\{\alpha_1,\alpha_2,\cdots,\alpha_n\}$ of $\widehat{T}^0$. We assume further $\langle\rho_i,x_j\rangle=\delta_{ij}$. Then we only need to show the monomial
$(\chi^1_{\sigma})^s(\chi^2_{\sigma})^{p-s}$ is not a $p$-th power in $K(X)\cong k(x_1,\cdots, x_n)$.

\smallskip Let $\chi^1_{\sigma}=x^{i_1}_1\cdots x^{i_n}_n$, $\chi^2_{\sigma}=x^{j_1}_1\cdots x^{j_n}_n$, then by condition $-p<i_k-j_k< p$ and $i_k\neq j_k$ for all $1\leq k\leq n$. Therefore $\frac{\chi^1_{\sigma}}{\chi^2_{\sigma}}$ is not a $p$-th power over $U_{\sigma}$, which implies $(\chi^1_{\sigma})^{p-s}(\chi^2_{\sigma})^s$ is not a $p$-th power.

\end{proof}

\begin{corollary}The tangent bundle and cotangent bundle of a smooth toric surface F-split compatibly with the toric surface.
\end{corollary}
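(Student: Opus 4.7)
The plan is to show that this corollary is a direct application of Theorem \ref{mt} together with the Klyachko data computed in Examples 2.1, once we observe that the proof of Theorem \ref{mt} via Proposition \ref{ca} in fact produces a Frobenius splitting of $\mathbb{V}(\mathcal{E})$ that is compatible with (i.e., descends to) a Frobenius splitting of $X$.

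First I would verify that the hypotheses of Theorem \ref{mt} are satisfied. Since $X$ is a smooth toric surface, both the tangent bundle $\mathcal{T}_X$ and the cotangent bundle $\Omega^1_X$ are rank-two toric bundles on $X$. From Example 2.1 (equation (\ref{cb}) and its dual), the Klyachko filtrations at any primitive $\alpha\in|\Sigma|$ are
\[
I^{\alpha}(\Omega^1_X)=\{-1,0\},\qquad I^{\alpha}(\mathcal{T}_X)=\{0,1\},
\]
since $E^{\alpha}(0)\subsetneq E^{\alpha}(-1)$ in the cotangent case and $E^{\alpha}(1)\subsetneq E^{\alpha}(0)$ in the tangent case (here the strict inclusions use that $\alpha$ is a nonzero primitive vector). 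Consequently
\[
\mathrm{Kl}(\Omega^1_X)=\mathrm{Kl}(\mathcal{T}_X)=1,
\]
and $n^{\alpha}_{\max}\neq n^{\alpha}_{\min}$ for every $\alpha$. The assumption $p>\mathrm{Kl}(\mathcal{E})=1$ is automatic in characteristic $p>0$, so Theorem \ref{mt} applies to both bundles.

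Next, to upgrade the conclusion to compatibility, I would revisit the proof of Theorem \ref{mt}. That proof constructs, via Proposition \ref{ka}, the transition cocycle for $\mathcal{E}=\mathcal{T}_X$ or $\Omega^1_X$ in the form (\ref{tm}) and verifies the condition of Lemma \ref{le} entry by entry. The resulting splitting on each chart $\mathbb{V}(\mathcal{E})|_{U_\sigma}$ is built from an ambient splitting $\varphi$ of $\mathcal{O}_X$ via formula (\ref{fs}), which on the zero section recovers $\varphi$ itself. Hence the global splitting $\phi$ of $\mathbb{V}(\mathcal{E})$ produced by the proof of Theorem \ref{mt} restricts to $\varphi$ on the zero section, making the zero section (isomorphic to $X$) a compatibly split subvariety of $\mathbb{V}(\mathcal{E})$. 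In our setting we take $\varphi$ to be the standard toric Frobenius splitting of the smooth toric variety $X$.

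The only genuine computation is the verification of $\mathrm{Kl}=1$ for both bundles, which is immediate from Example 2.1. No obstacle arises because the rank-two hypothesis of Theorem \ref{mt} matches the surface dimension hypothesis, and the Klyachko length of $\Omega^1_X$ and $\mathcal{T}_X$ is always exactly $1$, placing us comfortably below the characteristic bound $p>1$. The corollary therefore follows from Theorem \ref{mt} applied to $\mathcal{E}=\Omega^1_X$ and $\mathcal{E}=\mathcal{T}_X$, with the compatibility assertion read off from the explicit form of the splitting in (\ref{fs}).
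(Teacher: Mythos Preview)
Your proposal is correct and follows essentially the same approach as the paper: verify that the rank-two bundles $\mathcal{T}_X$ and $\Omega^1_X$ on a smooth toric surface satisfy the hypotheses of Theorem~\ref{mt}. The paper simply cites \cite[Example~2.3.5]{Klyachko1990} for this, whereas you compute $I^{\alpha}=\{-1,0\}$ and $\{0,1\}$ directly from the Klyachko data in Example~2.1 of the present paper; your explicit discussion of compatibility (via the form of the splitting in (\ref{fs}) and the proof of Theorem~\ref{mt}) is a welcome elaboration of something the paper leaves implicit.
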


\begin{proof}
By \cite[Example 2.3.5]{Klyachko1990}, the tangent bundles and cotangent bundles of a smooth toric surface always satisfy the condition of the theorem.

\end{proof}

Before proving theorem \ref{mt2}, we will investigate the Frobenius splitting of a toric variety in a little more detail. Since all toric varieties are \emph{F}-split, by our discussion in the previous section there exists a global section of $\omega^{1-p}_X$ for any smooth complete toric variety $X$ such that its image under $\iota$ (\ref{iota}) defines a Frobenius splitting of $X$. On the other hand, we have the following decomposition
\begin{equation}
H^0(X,\omega_X^{1-p})\label{omgdc}\cong\bigoplus_{\chi\in\widehat{T}}H^0(X,\omega_X^{1-p})_{\chi},
\end{equation}
\noindent where $H^0(X,\omega_X^{1-p})_{\chi}$ is the $\chi$-isotropy subspace of $H^0(X,\omega_X^{1-p})$.
\begin{lemma}\label{le4} Let $X=X(\Sigma)$ be a smooth complete toric variety, then the character $\mu\in\widehat{T}$ defined by $\langle\mu,\sigma\rangle=0$ for all $\sigma\in\Sigma$ is the unique character $\chi\in\widehat{T}$ such that there exists a nonzero element of $H^0(X,\omega^{1-p}_X)_{\chi}$ whose image under $\iota$
defines a Frobenius splitting of $X$.
\end{lemma}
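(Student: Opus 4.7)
The plan is to localize on a smooth maximal cone and reduce the splitting condition to a monomial identity via the explicit trace formula of Lemma \ref{tr}.

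First I would pick any maximal cone $\sigma \in \Sigma$; since $X$ is smooth and complete, $\dim\sigma = n := \dim X$ and $U_\sigma \cong \mathbb{A}^n$. Let $\{\sigma_1,\ldots,\sigma_n\} = |\sigma|$ be the primitive generators, $\{u_1,\ldots,u_n\} \subset \widehat{T}$ the dual basis serving as coordinates on $U_\sigma$, and set $\omega_\sigma := du_1\wedge\cdots\wedge du_n$, a local generator of $\omega_X$. By (\ref{chneg}) the monomial $u^m = u_1^{m_1}\cdots u_n^{m_n}$ carries $T$-weight $-m$, so $\omega_\sigma$ has weight $-\rho$ with $\rho := u_1+\cdots+u_n \in \widehat{T}$, and $\omega_\sigma^{1-p}$ has weight $(p-1)\rho$.

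Next I would unpack the splitting condition. By Lemma \ref{le3} together with the isomorphism (\ref{iso1'}), a section $s\in H^0(X,\omega_X^{1-p})$ determines some $\varphi\in H^0(X,\mathscr{H}om(F_{X*}\mathcal{O}_X,\mathcal{O}_X))$, and the splitting condition $\varphi(1)=1$ translates via Lemma \ref{tr} to $\mathrm{Tr}(h) = 1$ locally, where $s|_{U_\sigma} = h\cdot\omega_\sigma^{1-p}$. If $s$ has weight $\chi$, then $h$ has weight $\chi-(p-1)\rho$, and being a $T$-eigenfunction on $\mathbb{A}^n$ it must be a scalar multiple of the monomial $u^{(p-1)\rho-\chi}$, regular on $U_\sigma$ precisely when $(p-1)\rho-\chi \in \sigma^{\vee}\cap M$.

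I would then plug $h = c\cdot u^{(p-1)\rho-\chi}$ into the formula of Lemma \ref{tr}: $\mathrm{Tr}(h)$ vanishes unless $(p-1)\rho-\chi \equiv (p-1)\rho\pmod{p}$, i.e.\ $\chi = -pj$ for some $j\in M$; in that case $\mathrm{Tr}(h) = c^{1/p}u^j$, and regularity forces $j\in\sigma^{\vee}$. The identity $\mathrm{Tr}(h)=1$ then forces $j=0$ and $c=1$, hence $\chi = 0$. Since $|\Sigma|$ spans $N$ by completeness, the zero character is precisely the $\mu$ of the statement, which gives uniqueness. For existence, any smooth toric variety is Frobenius split, so some $s$ gives a splitting; decomposing $s = \sum_\chi s_\chi$ by $T$-weight, the local analysis shows that on each maximal chart only $s_0$ contributes a constant term to $\mathrm{Tr}$, and $\mathrm{Tr}(h)=1$ forces the local coefficient of $u^{(p-1)\rho}$ in $s_0|_{U_\sigma}$ to equal $1$ for every maximal $\sigma$. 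Hence $s_0 \neq 0$ and $s_0$ itself defines a splitting, providing the required element of $H^0(X,\omega_X^{1-p})_\mu$.

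The only delicate point is the careful bookkeeping of sign conventions for the $T$-weights (monomials vs.\ their exponents, and the weight of $du_i$); the remaining work is a direct application of Lemma \ref{tr}.
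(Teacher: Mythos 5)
Your proof is correct, and the uniqueness half follows the paper closely: restrict the eigensection to a smooth maximal chart $U_\sigma\cong\mathbb{A}^n$, write it as (constant)$\cdot u^{(p-1)\rho-\chi}\,\omega_\sigma^{1-p}$, and read off from the explicit trace formula (Lemma \ref{tr}) together with Lemma \ref{le3} that the constant term of $\mathrm{Tr}$ is nonzero only when all $\langle\chi,\sigma_i\rangle$ vanish, forcing $\chi=\mu$. (Your sign conventions match the paper's; note also that $\mathbf{j}\geq0$ comes for free from regularity of $h$, so the case $\chi=-pj$ with $j\neq0$ is killed by $\mathrm{Tr}(h)=1$, not by regularity per se.)

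Where you genuinely diverge from the paper is in the existence half. The paper invokes Thomsen's decomposition $F_{X*}\mathcal{O}_X\cong\mathcal{O}_X\oplus\bigoplus\mathcal{L}_i$, builds a splitting $\varphi_0$ adapted to that direct sum, and then compares the basis $\{\varphi_i\}$ against the $T$-isotypic decomposition of $\mathrm{Hom}(F_{X*}\mathcal{O}_X,\mathcal{O}_X)$ to extract an eigenvector with nonzero $\varphi_0$-coefficient. You instead start from an arbitrary splitting section $s=\sum_\chi s_\chi$, restrict to a maximal chart, and observe from the same trace computation that the only $T$-weight contributing a \emph{constant} to $\mathrm{Tr}(h)$ is $\chi=0$ (the other nonzero contributions are distinct nonconstant monomials $u^{j}$, $j\neq0$, and so cannot conspire to produce a constant). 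Hence $s_\mu$ already satisfies $\iota(s_\mu)(1)=1$ on one (equivalently every) maximal chart, and since $\iota(s_\mu)(1)$ is a global function on a complete variety it is the constant $1$. This is a more self-contained route: it reuses the chart-level analysis you already set up for uniqueness and does not depend on Thomsen's structure theorem, at the cost of being a purely local argument rather than a clean sheaf-theoretic one. Both are valid; yours is arguably tighter.
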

\begin{proof}We first prove the existence of a character $\chi\in\widehat{T}$ such that a nonzero vector in $H^0(X,\omega^{1-p}_{X})$ defines a Frobenius splitting of $X$. By the main result of \cite{thomsen2000},
$F_{X*}\mathcal{O}_X$ splits into direct sum of line bundles as follows $$F_{X*}\mathcal{O}_X\cong\mathcal{O}_X\oplus\bigoplus_{1\leq i\leq p^n-1}\mathcal{L}_i,$$

\noindent where $n=\dim X$. It is easy to see $H^0(X,\mathcal{L}_i)=0$ for all $1\leq i\leq p^n-1$. Let $\varphi_0:F_{X*}\mathcal{O}_X\rightarrow \mathcal{O}_X$ be the morphism whose restriction of $\mathcal{O}_X$ is identity and to $\mathcal{L}_i$ is $0$ for all $1\leq i\leq p^n-1$. Furthermore, let $H$ be the subspace of $Hom(F_{X*}\mathcal{O}_X,\mathcal{O}_X)$ consisting of morphisms whose restrictions to $\mathcal{O}_X$ is zero. Then one can find vectors $\varphi_i\in H,1\leq i\leq m$, such that $\{\varphi_i\}_{0\leq i\leq m}$ form a basis of $Hom(F_{X*}\mathcal{O}_X,\mathcal{O}_X)$.

On the other hand, we have the following decomposition
\begin{equation}
Hom(F_{X*}\mathcal{O}_X,\mathcal{O}_X)\cong H^0(X,\omega_X^{1-p})\cong\bigoplus_{\chi\in\widehat{T}}H^0(X,\omega_X^{1-p})_{\chi}.
\end{equation}
Thus there exists a character $\chi\in\widehat{T}$ such that $H^0(X,\omega_X^{1-p})_{\chi}\neq0$ and a nonzero vector $v\in H^0(X,\omega_X^{1-p})_{\chi}\neq0$ such that $v=\sum_ia_i\varphi_i$, $a_0\neq0$. Then we can find a vector $v$ such that the coefficient $a_0$ is exactly $1$, then $v$ defines a Frobenius splitting of $X$.

\medskip Next we prove zero character is the only one with the property described in the lemma. Let $\sigma\in\Sigma$ be a cone with maximal dimension, then the primitive vectors $\{\sigma_i\,|\,1\leq i\leq n\}=|\sigma|$ form a basis of $\widehat{T}^0\otimes_{\mathds{Z}}\mathds{R}$. Let $\chi\in\widehat{T}$ be a character defined by $\langle\chi,\sigma_i\rangle=l_i$ and assume $v\in H^0(X,\omega^{1-p}_X)_{\chi}$ defines a Frobenius splitting of $X$. By our discussion in section 2.2.3, the image of $v$ under the restriction map $\rho_{\sigma}:H^0(X,\omega^{1-p}_X)\rightarrow \Gamma(U_{\sigma},\omega^{1-p}_X)$ is of the form $$cu_1^{p-1-l_1}\cdots u_n^{p-1-l_n}e_{\sigma},$$

\noindent where $c\in k$ and $p-1-l_1,\cdots, p-1-l_n$ are necessarily nonnegative.

On the other hand, the section $e_{\sigma}\in\Gamma(U_{\sigma},\omega_X^{1-p})\cong\mathscr{H}om(\omega_X^p,\omega_X)|_{U_{\sigma}}$ is defined by sending the section $\omega_{\sigma}^{\otimes p}$ to $\omega_{\sigma}$, where $\omega_{\sigma}=du_1\wedge\cdots\wedge du_n$. Thus by Lemma \ref{le3}, $\iota(\rho_{\sigma}(v))(1)$ is nonzero iff $l_1=\cdots=l_n=0$ hence the lemma is proved.

\end{proof}

The above lemma applies to projective spaces, and we are lead to the following definition of an \emph{FS}-vector.

\begin{definition} Let $E$ be a vector space of dimension $r$ over $k$, then a vector $v$ in $S^{r(p-1)}E$ is called an FS-vector if under a basis $\{e^i\}_{1\leq i\leq r}$ of $E$, $v=ce_1^{p-1}\cdots e_r^{p-1}$ for some $c\in k^*$.
\end{definition}

\begin{proof}[Proof of Theorem 2] We first prove the sufficiency of the condition in the theorem. By criteria B in the previous section, to prove $Y=\mathbb{P}(\mathcal{E})$ is \emph{F}-split, it suffices to show the map $\pi$ (\ref{gs}) is surjective. Since $H^0(X,\omega^{1-p}_X)$ has the decomposition (\ref{omgdc}) and a vector of $H^0(X,\omega^{1-p}_X)_{\mu}$ defines a Froenius splitting of $X$, it suffices to prove $H^0(X,\omega^{1-p}_X)_{\mu}$ is contained in the image of $\pi$.

On the other hand, note we have
$$f_*\omega_Y^{1-p}\cong S^{r(p-1)}\mathcal{E}\otimes\det\mathcal{E}^{1-p}\otimes\omega_X^{1-p},$$
whence an induced morphism
\begin{equation}\label{pi}H^0(X,S^{r(p-1)}\mathcal{E}\otimes\det\mathcal{E}^{1-p}\otimes\omega_X^{1-p})\rightarrow H^0(X,\omega_X^{1-p}),\end{equation}
which is still denoted by $\pi$.
\noindent Then by lemma \ref{le4} it suffices to prove the image of an \emph{FS}-vector in
$H^0(X,S^{n(p-1)}\mathcal{E}\otimes\det\mathcal{E}^{1-p}\otimes\omega^{1-p}_X)_{\mu}$ under $\pi$ falls in
$H^0(X,\omega_X^{1-p})_{\mu}$ and is nonzero.

\smallskip First we prove the map (\ref{pi}) is equivariant. Let $s$ be an element of the left side, then we need to show $t.\pi(s)=\pi(t.s)$ for $t\in T$. We can further reduce to prove this claim in the case when $s$ is an eigenvector, i.e. $t.s=\chi(t)s$ for some $\chi\in\widehat{T}$ and $t\in T$.

\smallskip Let $U_{\sigma}$ be an open affine sub toric variety of $X$, then we can assume $U_{\sigma}$ isomorphic to $\mathbb{A}^n$ since $X$ is smooth. Let $\{e^i_{\sigma}\}_{1\leq i\leq r}$ be an eigenbasis of $\mathcal{E}|_{U_{\sigma}}$, $\{\chi^i_{\sigma}\}_{1\leq i\leq r}$ be the corresponding eigen-characters. By trivializing the line bundle $\det\mathcal{E}^{1-p}\otimes\omega_X^{1-p}$, we can use $\{(e_{\sigma}^1)^{i_1}\cdots (e_{\sigma}^r)^{i_r}\}$ to denote the basis of $S^{r(p-1)}\mathcal{E}\otimes\det\mathcal{E}^{1-p}\otimes\omega^{1-p}_X$, where $i_1,\cdots,i_r$ are nonnegative integers and $i_1+\cdots+i_r=r(p-1)$. For simplicity, $(e_{\sigma}^1)^{i_1}\cdots (e_{\sigma}^r)^{i_r}$ will be denoted by $e_{\sigma}^I$ with $I=(i_1,\cdots,i_r)$ in the sequel. By (\ref{decse}) and (\ref{decdet}), the eigen-characters corresponding to $e^I_{\sigma}$ is given by

$$\chi^I_{\sigma}=\sum_{1\leq j\leq n}i_j\chi^j_{\sigma}-\sum_{1\leq j\leq n}(p-1)\chi^j_{\sigma}+\chi_{p-1},$$

\noindent where $\chi_{p-1}\in \widehat{T}$ is the character defined by $\langle\chi_{p-1},\sigma_i\rangle=p-1$ for all $1\leq i\leq n$.

By our discussion in section 2.2.3, the image of $s$ under the restriction map $\rho_{\sigma}:H^0(S^{r(p-1)}\mathcal{E}\otimes\det\mathcal{E}^{1-p}\otimes\omega_X^{1-p})\rightarrow
\Gamma(U_{\sigma},S^{r(p-1)}\mathcal{E}\otimes\det\mathcal{E}^{1-p}\otimes\omega_X^{1-p})$ can be written as linear combination of terms of the following form
$$a_Ie_I\otimes\omega_{\sigma}^{1-p},$$

\noindent where $a_I=u_1^{\chi_{\sigma}^I(\sigma_1)-\chi(\sigma_1)}\cdots u_n^{\chi_{\sigma}^I(\sigma_n)-\chi(\sigma_n)}$. Then the image of $\rho_{\sigma}(s)$ under $\pi$ is
\begin{equation}\label{pis}
\begin{cases}u_1^{p-1-\chi(\sigma_1)}\cdots u_n^{p-1-\chi(\sigma_n)}\omega_{\sigma}^{1-p}, & \text{if $i_j=p-1$ for all $1\leq j\leq n$};\\
             0, & \text{otherwise}.
\end{cases}
\end{equation}

\noindent Then one can see easily $t(\pi(\rho_{\sigma}(s)))=\chi(t)\pi(\rho_{\sigma}(s))$, so the morphism $\pi$ is $T$-equivariant.

Given a global section $s\in H^0(S^{r(p-1)}\mathcal{E}\otimes\det\mathcal{E}^{1-p}\otimes\omega^{1-p}_X)_{\mu}$ that can be represented by an  \emph{FS}-vector, one sees easily by Lemma \ref{tr} and \ref{le3}, the image $\pi(s)$ defines a Frobenius splitting of $X$ hence the sufficiency of the condition is proved.

Now we prove the necessity of the condition. Suppose there exists a global section $s\in H^0(S^{r(p-1)}\mathcal{E}\otimes\det\mathcal{E}^{1-p}\otimes\omega^{1-p}_X)$ such that $\iota(\pi(s))$ defines a Frobenius splitting of $X$, then by the fact that $\pi$ is $T$-equivariant and lemma \ref{le4}, the image of $H^0(S^{r(p-1)}\mathcal{E}\otimes\det\mathcal{E}^{1-p}\otimes\omega^{1-p}_X)_{\mu}$ under $\pi$ is nonzero. Indeed, since the dimension $H^0(X,\omega^{1-p}_X)_{\mu}$ is 1, one can choose a global section $s\in H^0(S^{r(p-1)}\mathcal{E}\otimes\det\mathcal{E}^{1-p}\otimes\omega^{1-p}_X)_{\mu}$ such that $\iota(\pi(s))$ defines a Frobenius splitting of $X$. On the other hand, by our proof of the $T$-equivariance of $\pi$, the image $\pi(s)$ is nonzero only if $s$ can be represented by an $FS$-vector, hence the necessity and the theorem is proved.

\end{proof}

\begin{corollary} Let $X=X(\Sigma)$ be a smooth complete toric variety, then $\mathbb{P}\mathcal{T}_X$ is Frobenius split.
\end{corollary}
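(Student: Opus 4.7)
The plan is to apply Theorem \ref{mt2} directly: it suffices to exhibit an FS-vector in $H^0(X,S^{n(p-1)}\mathcal{T}_X\otimes\det\mathcal{T}_X^{1-p}\otimes\omega_X^{1-p})_{\mu}$, where $n=\dim X$. The first simplification is to note that $\det\mathcal{T}_X\cong\omega_X^{-1}$, so the twist $\det\mathcal{T}_X^{1-p}\otimes\omega_X^{1-p}$ is canonically the trivial line bundle. Thus the target space is simply $H^0(X,S^{n(p-1)}\mathcal{T}_X)_{\mu}$, and the FS-condition is tested inside the fiber $S^{n(p-1)}E$ of $S^{n(p-1)}\mathcal{T}_X$, where $E=\widehat{T}^0\otimes k$ is the underlying vector space of Klyachko data for $\mathcal{T}_X$.

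Next I would compute that weight space using Proposition \ref{pgs}. Since $\mu$ is the zero character, $\langle\mu,\alpha\rangle=0$ for every primitive $\alpha\in|\Sigma|$, so I need
\[H^0(X,S^{n(p-1)}\mathcal{T}_X)_{\mu}=\bigcap_{\alpha\in|\Sigma|}(S^{n(p-1)}E)^{\alpha}(0).\]
The Klyachko data of $\mathcal{T}_X$ from the worked example is $E^{\alpha}(0)=E$, $E^{\alpha}(1)=k\alpha$, and $E^{\alpha}(i)=0$ for $i\geq 2$. Feeding this into the symmetric-product formula (\ref{klsme}) (equivalently, using the decomposition $E=k\alpha\oplus V_\alpha$ for any complement $V_\alpha$ of $k\alpha$), one sees that the induced filtration on $S^{n(p-1)}E$ satisfies $(S^{n(p-1)}E)^{\alpha}(i)=\alpha^{i}\cdot S^{n(p-1)-i}E$. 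In particular $(S^{n(p-1)}E)^{\alpha}(0)=S^{n(p-1)}E$ for every $\alpha$, so the intersection above is the whole space $S^{n(p-1)}E$.

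Having identified $H^0(X,S^{n(p-1)}\mathcal{T}_X)_{\mu}=S^{n(p-1)}E$, any FS-vector in $S^{n(p-1)}E$ does the job. Concretely, fix any basis $\{e_1,\dots,e_n\}$ of $E=\widehat{T}^0\otimes k$ (for instance the basis dual to the primitive generators of a fixed maximal cone); then $v=e_1^{p-1}\cdots e_n^{p-1}$ is an FS-vector by definition, and it lies in the $\mu$-weight space by the computation above. Theorem \ref{mt2} now implies $\mathbb{P}(\mathcal{T}_X)$ is Frobenius split.

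I do not foresee a serious obstacle here: the main content is the elementary observation that for the tangent-bundle filtration, the level-$0$ piece of every induced symmetric-power filtration is the whole space, combined with the numerical coincidence $\det\mathcal{T}_X\otimes\omega_X=\mathcal{O}_X$ that trivializes the twist in Theorem \ref{mt2}. The one point to be careful about is checking that the natural identification of fibers used to speak of ``FS-vector in $S^{r(p-1)}E$'' matches the identification implicit in Theorem \ref{mt2} once the trivialization of $\det\mathcal{T}_X^{1-p}\otimes\omega_X^{1-p}$ is chosen; but this is just the statement that the chosen trivialization is $T$-equivariant, which follows from the canonicity of $\det\mathcal{T}_X\cong\omega_X^{-1}$.
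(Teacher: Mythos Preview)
Your argument is correct and follows essentially the same route as the paper: both proofs reduce to exhibiting an FS-vector in $H^0(X,S^{n(p-1)}\mathcal{T}_X)_{\mu}$ (using $\det\mathcal{T}_X\cong\omega_X^{-1}$ to trivialize the twist) and then invoke Theorem~\ref{mt2}. The paper simply writes down the specific FS-vector $\sigma_1^{p-1}\cdots\sigma_n^{p-1}$ coming from the ray generators of a maximal cone, whereas you go one step further and compute that $(S^{n(p-1)}E)^{\alpha}(0)=S^{n(p-1)}E$ for every $\alpha$, so that the $\mu$-weight space is all of $S^{n(p-1)}E$ and \emph{any} FS-vector works; this is a mild strengthening but not a different strategy.
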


\begin{proof}Let $\{\sigma_i\}_{1\leq i\leq n}$ be the ray generators of a cone $\sigma\in\Sigma$ with maximal dimension, then by proposition \ref{pgs} and (\ref{klsme}), $\sigma_1^{p-1}\cdots\sigma_n^{p-1}$ is an \emph{FS}-vector of $H^0(S^{n(p-1)}\mathcal{T}_X)_{\mu}$.
Thus by theorem 2, $\mathbb{P}\mathcal{T}_X$ is \emph{F}-split .

\end{proof}
\bibliographystyle{amsplain}
\bibliography{xhBibTex}

\end{document}